\renewcommand{\baselinestretch}{\baselinestretch}
\renewcommand{\baselinestretch}{1.1}
\numberwithin{equation}{section}
\newtheorem{thm}{Theorem}[section]
\newtheorem{lem}[thm]{Lemma}
\newtheorem{cor}[thm]{Corollary}
\newtheorem{prop}[thm]{Proposition}
\newtheorem{defn}[thm]{Definition}
\newtheorem{rmk}[thm]{Remark}
\newcommand{\ra}{{\ \rightarrow\ }}
\newcommand{\gen}{\text{gen}}
\newcommand{\ord}{\text{ord}}
\newcommand{\z}{{\mathbb Z}}
\newcommand{\n}{{\mathbb N}}
\newcommand{\MM}[1]{\mathcal{M}(#1)}
\newcommand{\Mod}[1]{\ (\mathrm{mod}\ #1)}
\begin{document}
\title[Minimal universality criterion sets] {Minimal universality criterion sets on the representations of quadratic forms}

\author{Kyoungmin Kim, Jeongwon Lee, and Byeong-Kweon Oh}

\address{Department of Mathematics, Sungkyunkwan University, Suwon 16419, korea}
\email{kiny30@skku.edu}
\thanks{This work of the first author was supported by the National Research Foundation of Korea(NRF) grant funded by the Korea government(MSIT) (NRF-2020R1I1A1A01055225)}

\address{Mechatronics R\&D Center, Samsung Electronics, Hwaseong 18848, Korea}
\email{jeong.1.lee@samsung.com}

\address{Department of Mathematical Sciences and Research Institute of Mathematics, Seoul National University, Seoul 08826, Korea}
\email{bkoh@snu.ac.kr}
\thanks{This work of the third author was supported by the National Research Foundation of Korea(NRF) grant funded by the Korea government(MSIT) (NRF-2019R1A2C1086347 and NRF-2020R1A5A1016126).}

\subjclass[2010]{Primary 11E12, 11E20}

\keywords{Minimal universality criterion sets, universal quadratic forms}

\begin{abstract} 
For a set $S$ of (positive definite and integral) quadratic forms with bounded rank, a quadratic form $f$ is called $S$-universal if it represents all quadratic forms in $S$. A subset $S_0$ of $S$ is called an $S$-universality criterion set if any $S_0$-universal quadratic form is $S$-universal. We say $S_0$ is minimal if there does not exist a proper subset of $S_0$ that is an $S$-universality criterion set.     In this article, we study various properties of minimal universality criterion sets. 
In particular,  we show that  for `most' binary quadratic forms $f$, minimal $S$-universality criterion sets are unique in the case when $S$ is the set of all subforms of the binary form $f$. 
\end{abstract}
\maketitle

\section{Introduction}
Let $S$ be a set of (positive definite integral) quadratic forms with bounded rank. A  quadratic form $f$ is called {\it $S$-universal} if it represents all quadratic forms in the set $S$. A subset $S_0$ of $S$ is called {\it an $S$-universality criterion set}  if any $S_0$-universal quadratic form  is $S$-universal. Conway and Schneeberger's 15-theorem \cite{c} says that the set $\{1,2,3,5,6,7,10,14,15\}$ is an $S$-universality criterion set, where $S$ is the set of all positive integers (see also \cite{b}). Note that any positive integer $a$ corresponds to the unary quadratic form $ax^2$. For an arbitrary set $S$ of quadratic forms with bounded rank, the existence of a finite $S$-universality criterion set was proved by the third author and his collaborators in \cite{kko}. 

An $S$-universality criterion set $S_0$ is called {\it minimal} if any proper subset of $S_0$ is not an $S$-universality criterion set. In \cite{kko}, the authors proposed the following two questions on the minimal universality criterion sets: Let $\Gamma(S)$ be the set of all $S$-universality criterion sets.
\begin{itemize}
\item [(i)] For which $S$ is there a unique minimal $S_0 \in \Gamma(S)$?
\item [(ii)] Is $\vert S_0\vert=\gamma(S)$ for every minimal $S_0 \in \Gamma(S)$? If not, when?
\end{itemize}
For the question (i), when $S$ is the set of all quadratic forms of rank $k$, the uniqueness of the minimal $S$-universality criterion set was proved by Bhargava \cite{b} for the case when $k=1$, and by Kominers 
\cite{kom1}, \cite{kom2} for the cases when $k=2$, $k=8$, respectively (see also \cite{k}, \cite{kko0}, and \cite{o1}). 

For the question (ii), Elkies, Kane, and Kominers \cite{ekk} answered  in the negative for some special set $S$ of quadratic forms.  In fact, they considered the set $S_f$ of all subforms of the quadratic form $f(x,y,z)= x^2+y^2+2z^2$. Clearly, the set $\{f\}$ itself is a minimal $S_f$-universality criterion set. They proved that any quadratic form that represents both subforms $x^2+y^2+8z^2$ and $2x^2+2y^2+2z^2$ of $f$ also represents $f$ itself.  Therefore the set $\{x^2+y^2+8z^2, 2x^2+2y^2+2z^2\}$ is also a minimal $S_f$-universality criterion set. 

In this article, we prove that  the question (i) is true if $S$ is any set of positive integers, that is, any set of unary quadratic forms.  We also prove that minimal $\Phi_n$-universality criterion sets are not unique when $\Phi_n$ is the set of all quadratic forms of rank $n$ for any $n\ge 9$. To analyze the example given by Elkies, Kane, and Kominers more closely, we introduce the notion of a {\it recoverable} quadratic form.   A quadratic form $f$ is called recoverable if  minimal $S_f$-universality criterion sets are not unique in the case when $S_f$ is the set of all subforms of $f$.  The third author and his collaborators proved in \cite{jko} that any unary quadratic form is not recoverable.  In this article, we show that `most' binary quadratic forms are not recoverable, and in fact, there are infinitely many recoverable binary quadratic forms up to isometry.

The subsequent discussion will be conducted in the better adapted geometric language of quadratic spaces and lattices.  Throughout this article, we always assume that every $\z$-lattice $L=\z x_1+\z x_2+\dots+\z x_k$ is {\it positive definite and integral}, that is,  the corresponding symmetric matrix 
$$
M_L=(B(x_i,x_j)) \in M_{k\times k}(\z)
$$ 
is positive definite and  the scale $\mathfrak s(L)$ of the $\z$-lattice $L$ is $\z$. The corresponding quadratic map $Q: L \to \z$  will be defined by $Q(x)=B(x,x)$ for any $x\in L$.  For any positive integer $a$,  the $\z$-lattice obtained from $L$ by scaling $a$ will be denoted by $L^a$. Hence we have $M_{L^a}=a\cdot M_L$. 
The discriminant $dL$ of the $\z$-lattice $L$ will be defined by $dL=\det(M_L)$. We call $L$ is diagonal if $B(x_i,x_j)=0$ for any $i$ and $j$ with $i\ne j$. If $L$ is diagonal, then we simply write 
$$
L=\langle Q(x_1),\dots,Q(x_k)\rangle.
$$  
We say $L$ is even if $Q(x)$ is even for any $x \in L$. 

 If an integer $n$ is represented by $L$ over $\z_p$ for any prime $p$ including infinite prime, then we say that $n$ is represented by the genus of $L$, and we write $n  \ra  \gen(L)$.  Note that $n$ is represented by the genus of $L$ if and only if $n$ is represented by a $\z$-lattice in the genus of $L$. 
When $n$ is represented by the $\z$-lattice $L$ itself, then we write $n\ra L$.   We define
$$
Q(L)=\{ n \in \z : n \ra L\}.
$$
For any positive integer $n$, The cubic $\z$-lattice $I_n=\z e_1+\z e_2+\dots+\z e_n$ is the $\z$-lattice satisfying  $B(e_i,e_j)=\delta_{ij}$.

Any unexplained notation and terminology can be found in \cite{ki} or  \cite{om}.

\section{Uniqueness of the minimal universality criterion set}
 In  general, minimal $S$-universality criterion sets are not unique for an arbitrary set $S$ of quadratic forms with bounded rank.   
 In this section, we prove that the minimal $S$-universality criterion set is unique if $S$ is any subset of positive integers. 

Let $\mathbb N$ be the set of positive integers. For a positive integer $k$ and a nonnegative integer $\alpha$, we define the set of arithmetic progressions 
$$
A_{k,\alpha}=\{kn+\alpha : n\in \mathbb N \cup \{0\}\}.
$$  
If a $\z$-lattice $L$ represents all elements in $A_{k,\alpha}$, we simply write $A_{k,\alpha} \ra L$.

\begin{prop} \label{keyp}
Let $S=\{ s_0,s_1,s_2,\dots\}$ be a subset of $\n$ such that $s_i < s_{i+1}$ for any nonnegative integer $i$, and let $k$ be a positive integer. If there is a $\z$-lattice $\ell$  such that 
$$
s_0,s_1,\dots,s_{k-1} \in Q(\ell) \quad \text{and} \quad s_k \not \in Q(\ell),
$$
then there is a $\z$-lattice $L$ such that $Q(L)\cap S=S-\{s_k\}$.
\end{prop}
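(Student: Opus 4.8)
The plan is to realize $L$ as an orthogonal sum $L=\ell\perp M$, where $M$ is an auxiliary $\z$-lattice all of whose nonzero values exceed $s_k$. Writing $Q(\ell\perp M)=\{a+b:a\in Q(\ell),\ b\in Q(M)\}$, this shape decouples the two requirements. The elements $s_0,\dots,s_{k-1}$ lie in $Q(\ell)\subseteq Q(L)$, so they are represented automatically. To see that $s_k\notin Q(L)$, suppose $s_k=a+b$ with $a\in Q(\ell)$ and $b\in Q(M)$; since $a\ge 0$ we have $b\le s_k$, and as every nonzero value of $M$ is larger than $s_k$ this forces $b=0$, whence $a=s_k\in Q(\ell)$, contradicting the hypothesis. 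Thus the single condition that $M$ has no nonzero value $\le s_k$ already guarantees that $s_k$ is omitted, regardless of what else $M$ represents.

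After this reduction, the only task left is to produce one lattice $M$ with no nonzero value $\le s_k$ that nonetheless represents every $s_i$ with $i>k$. Because the $s_i$ are strictly increasing, each such $s_i$ satisfies $s_i\ge s_k+1$, so it suffices to arrange $Q(M)\supseteq\{n\in\z:n\ge s_k+1\}$. Then $L=\ell\perp M$ represents each $s_i$ with $i>k$ directly through $0\in Q(\ell)$, and combined with the previous paragraph we obtain $Q(L)\cap S=S-\{s_k\}$, as desired.

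The main obstacle is therefore the construction of $M$: I must represent the entire tail from $s_k+1$ on while representing no positive integer $\le s_k$. A single universal form fails because its minimum is too small, and a single scaled form fails because it misses most residue classes; the resolution is to treat the residues modulo $m:=s_k+1$ separately. The scaled cubic lattice $I_4^{\,m}$ represents exactly the nonnegative multiples of $m$ by Lagrange's four-square theorem, covering the residue class of $0$ modulo $m$ with smallest nonzero value $m$. For each $r$ with $1\le r\le m-1$, adjoining a rank-one block $\langle m+r\rangle$ and combining it with $I_4^{\,m}$ yields the values $(m+r)+mk$ for all $k\ge 0$, that is, every integer $\ge m+r$ congruent to $r$ modulo $m$; since $r<m$, the number $m+r$ is the least integer $\ge m$ in its class. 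Taking $M=I_4^{\,m}\perp\langle m+1\rangle\perp\cdots\perp\langle 2m-1\rangle$, every nonzero value of $M$ is at least $m>s_k$, and a short case check on the residue of $n$ shows $Q(M)\supseteq\{n:n\ge m\}$.

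Finally, $M$ is positive definite and integral with all blocks integral, so $L=\ell\perp M$ is again positive definite with $\mathfrak s(L)=\z$ inherited from $\ell$; hence $L$ is a legitimate $\z$-lattice. The step I expect to require the most care is the verification that the chosen blocks already cover the full tail $\{n\ge m\}$, together with the bookkeeping that $m+r$ is the smallest admissible value in each residue class, but this is elementary once the residue decomposition is set up.
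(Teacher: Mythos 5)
Your proof is correct and takes essentially the same route as the paper's: both build $L=\ell\perp(\text{scaled }I_4)\perp(\text{unary blocks, one per residue class})$, use Lagrange's four-square theorem to sweep arithmetic progressions covering the tail of $S$, and exclude $s_k$ because every nonzero value coming from outside $\ell$ exceeds $s_k$. The only (immaterial) difference is that you work modulo $s_k+1$ and represent the entire tail $\{n\ge s_k+1\}$, whereas the paper works modulo $s_{k+1}$ and covers only those residue classes actually met by $S$.
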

\begin{proof} 
First, we define
$$
\mathfrak C=\{ 0 \le u \le s_{k+1}-1 : A_{s_{k+1},u} \cap \{s_{k+1},s_{k+2},\dots\} \ne \emptyset\}=\{ c_1,c_2,\dots,c_v\},
$$
and for each $c \in \mathfrak C$, let $s(c)=\min(A_{s_{k+1},c} \cap \{s_{k+1},s_{k+2},\dots\})$. 
We define 
$$
L=\ell \perp s_{k+1} I_4 \perp \langle s(c_1),s(c_2),\cdots,s(c_v) \rangle.
$$
Since $s_{k+1} >s_k$ and $s(c_j) > s_k$ for any $j=1,2,\dots,v$, we see that $s_k$ is not represented by $L$. Furthermore, for any integer 
$a \in \{s_{k+1},s_{k+2},\dots\}$, there is a nonnegative integer $M$ and an integer $j$ with $1\le j \le v$ such that $a=s_{k+1}M+s(c_j)$. Since $M$ is represented by $I_4$,  the integer $a$ is represented by $L$. The proposition follows directly from this.  
\end{proof}

\begin{thm}  
For any set $S=\{s_0,s_1,s_2,\dots\}$ of positive integers, the minimal $S$-universality criterion set is unique.  
\end{thm}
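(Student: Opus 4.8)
The plan is to exhibit a canonical candidate for the minimal criterion set and show that it is simultaneously a criterion set and contained in every criterion set. I would set
$$
S_0=\{\,s\in S : \text{there is a }\z\text{-lattice }L\text{ with }Q(L)\cap S=S-\{s\}\,\},
$$
the set of elements that are individually unavoidable. First I would verify the easy containment, that $S_0$ lies inside every $S$-universality criterion set. Indeed, if $s\in S_0$ is witnessed by a lattice $L$ with $Q(L)\cap S=S-\{s\}$, and $T$ were a criterion set with $s\notin T$, then $T\subseteq S-\{s\}\subseteq Q(L)$, so $L$ is $T$-universal; since $T$ is a criterion set, $L$ would be $S$-universal and hence represent $s$, contradicting $s\notin Q(L)$. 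Thus $S_0\subseteq T$ for every criterion set $T$.

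The heart of the argument is the converse, that $S_0$ is itself an $S$-universality criterion set. Let $f$ be any $S_0$-universal $\z$-lattice and suppose, for contradiction, that $f$ fails to represent some element of $S$. As $S\subseteq\n$ is well ordered, let $s_k$ be the least element of $S$ with $s_k\notin Q(f)$, so that $s_0,s_1,\dots,s_{k-1}\in Q(f)$. If $k\ge 1$, then Proposition~\ref{keyp} applied with $\ell=f$ yields a $\z$-lattice $L$ with $Q(L)\cap S=S-\{s_k\}$, which is exactly the statement $s_k\in S_0$; but $f$ is $S_0$-universal, so $f$ represents $s_k$, a contradiction. Hence $f$ represents every element of $S$, i.e.\ $f$ is $S$-universal.

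It remains to settle the boundary case $k=0$, which Proposition~\ref{keyp} does not cover since it requires $k\ge 1$; here I would show directly that $s_0\in S_0$. Starting from a binary $\z$-lattice $\ell$ of scale $\z$ whose nonzero values all exceed $s_0$ (for instance the one with Gram matrix $\left(\begin{smallmatrix}s_0+1 & 1\\ 1 & s_0+1\end{smallmatrix}\right)$, so that $s_0\notin Q(\ell)$ while the off-diagonal $1$ forces the scale to be $\z$), the same recipe as in the proof of Proposition~\ref{keyp}, now with $s_{k+1}$ replaced by $s_1$, namely $L=\ell\perp s_1 I_4\perp\langle s(c_1),\dots,s(c_v)\rangle$, produces a $\z$-lattice with $Q(L)\cap S=S-\{s_0\}$. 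Thus $s_0\in S_0$ unconditionally, and the argument of the previous paragraph then goes through for $k=0$ as well.

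Finally I would assemble the uniqueness. Since $S_0$ is a criterion set and every criterion set contains $S_0$, any minimal criterion set $T$ satisfies $S_0\subseteq T$; were the inclusion proper, $S_0$ would be a proper subset of $T$ that is still a criterion set, contradicting the minimality of $T$. Hence $T=S_0$, so $S_0$ is the unique minimal $S$-universality criterion set (and is itself minimal, as no proper subset of it can be a criterion set). I expect the only genuine obstacle to be the second step, proving that $S_0$ is a criterion set; but the least-missing-element device reduces it almost entirely to Proposition~\ref{keyp}, the only extra work being the scale bookkeeping needed to place the base element $s_0$ in $S_0$.
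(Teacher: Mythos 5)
Your proposal is correct and follows essentially the same route as the paper: the paper's proof defines the set of \emph{truants} (elements $s_i$ missed by some lattice representing $s_0,\dots,s_{i-1}$), uses Proposition \ref{keyp} to show every criterion set must contain them, and runs the same least-missing-element argument to show the truant set is itself a criterion set; your $S_0$ is just this truant set described via the strong witness $Q(L)\cap S=S-\{s\}$ rather than the weak one. The only difference is that you explicitly patch the $s_0$ boundary case that Proposition \ref{keyp} (stated for positive $k$) does not literally cover, which the paper dismisses with ``clearly $s_0$ is a truant''; your care there is warranted but does not change the argument.
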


\begin{proof} 
Without loss of generality, we may assume that $s_i < s_{i+1}$ for any nonnegative integer $i$. A positive integer $s_i \in S$ is called a truant of $S$ if there is a $\z$-lattice $L$ such that $L$ represents all integers in the set $\{s_0,s_1,\dots,s_{i-1}\}$, whereas $L$ does not represent $s_i$. Clearly, $s_0$ is a truant of $S$. Let $T(S)$ be the set of truants of $S$. Then, by Proposition \ref{keyp}, any $S$-universality criterion set should contain $T(S)$. Hence it suffices to show that $T(S)$ itself is an $S$-universality criterion set. Let $L$ be a $\z$-lattice that represents all integers in $T(S)$. Suppose that $L$ is not $S$-universal. Let $m$ be the smallest integer such that $s_m$ is not represented by $L$. Then, clearly, $s_m$ is a truant of $S$, and hence $s_m \in T(S)$. This is a contradiction. Therefore, $T(S)$ is the unique minimal $S$-universality criterion set.    
\end{proof}

\begin{rmk} {\rm As pointed out by \cite{h},  Bhargava also proved the above result.  However,  no proof of this has appeared in the literature to the author's knowledge.}  
\end{rmk}

Let $L$ be a $\z$-lattice of rank $m$. For any positive integer $j$ less than or equal to  $m$, the $j$-th successive minimum of $L$ will be denoted by $\mu_j(L)$ ( for the definition of the successive minimum, see Chapter 12 of \cite{ca}). It is well-known that there is a constant $\gamma_m$, which is called the Hermite constant, such that  $$
dL\le  \mu_1(L)\mu_2(L)\cdots \mu_m(L) \le \gamma_m^mdL
$$ 
(for the proof, see Proposition 2.3 of \cite{ea}). We define $\min(L)=\min\{ Q(x)\mid x \in L-\{0\}\}$. Note that $\min(L)=\mu_1(L)$.

\begin{thm}
For any positive integer $k$, there is a subset $S$ of positive integers such that the cardinality of its minimal universality criterion set is exactly $k$.
\end{thm}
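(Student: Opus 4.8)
The plan is to lean entirely on the uniqueness theorem just proved: it identifies the unique minimal $S$-universality criterion set with the set $T(S)$ of truants of $S$. Hence it suffices, for each fixed $k$, to produce a set $S$ of positive integers with $|T(S)|=k$. I would build $S$ as a union of $k$ square-classes whose squarefree ``bases'' are nested by divisibility, designed so that every element except the $k$ bases is automatically forced, and each base is genuinely a truant.

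Concretely, fix distinct primes $\pi_2<\pi_3<\dots<\pi_k$, set $\Pi_i=\pi_{i+1}\pi_{i+2}\cdots\pi_k$ (so $\Pi_k=1$), and choose primes $s_1,\dots,s_k$, each coprime to all the $\pi_j$, increasing fast enough that $a_i:=\Pi_i\,s_i$ satisfies $a_1<a_2<\dots<a_k$. Then each $a_i$ is squarefree, the $a_i$ are pairwise distinct, and I put
$$
S=\bigcup_{i=1}^{k}\{\,a_i t^2 : t\in\n\,\}.
$$
Since $S$ is infinite, both the uniqueness theorem above and Proposition \ref{keyp} apply directly, so I only have to compute $T(S)$ and check that it has exactly $k$ elements.

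Two verifications remain. First, the non-primitive elements contribute no truants: if $t\ge 2$ then $a_it^2>a_i$, so $a_i$ already occurs earlier in $S$, and any $\z$-lattice representing $a_i$ represents $a_it^2$ (scale the representing vector by $t$); thus $a_it^2$ is represented by every lattice representing the smaller members of $S$ and is not a truant. Second, each base $a_m$ \emph{is} a truant, and here the key device is a divisibility witness. Consider the diagonal lattice $F_m=\langle a_1,a_2,\dots,a_{m-1}\rangle$. By construction $\pi_m$ divides each of $a_1,\dots,a_{m-1}$ but not $a_m$, so every value of $F_m$ is divisible by $\pi_m$, whereas $a_m$ is not; hence $F_m$ misses $a_m$. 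On the other hand $F_m$ represents $a_it^2=Q(t e_i)$ for every $i<m$, i.e.\ it represents every element of $S$ smaller than $a_m$. Therefore $a_m$ is a truant for each $m$, and combining the two steps yields $T(S)=\{a_1,\dots,a_k\}$, so the minimal criterion set has exactly $k$ elements.

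The only real obstacle is this second step, and it is precisely where a naive attempt breaks down: the plain diagonal lattice $\langle a_1,\dots,a_{m-1}\rangle$ can be (almost) universal once $m-1\ge 4$, in which case it would represent $a_m$ and destroy the truancy. The nested-divisibility trick sidesteps this entirely, since a form all of whose coefficients are divisible by $\pi_m$ cannot represent any integer prime to $\pi_m$, regardless of its rank. The one point that genuinely requires care is arranging the bases to be simultaneously squarefree, strictly increasing, and divisibility-nested; the auxiliary primes $\pi_j$ together with a sufficiently fast-growing choice of the $s_i$ accomplish exactly this, and the infinitude of primes guarantees such choices exist.
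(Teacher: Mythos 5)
Your proof is correct, and it takes a genuinely different route from the paper's. The paper fixes the intended criterion set $\{k+1,k+2,\dots,2k\}$ first and manufactures $S$ implicitly: it enumerates the finitely many isometry classes $L_1,\dots,L_t$ of lattices generated by vectors of norms $k+1,\dots,2k$ (finiteness coming from the successive-minima bound $dL\le(2k)^k$) and sets $S=\bigcap_{i}Q(L_i)$, so that $\{k+1,\dots,2k\}$ is an $S$-universality criterion set by construction; truancy of each $k+i$ is then certified by $M_i=\langle k+1,\dots,k+i-1\rangle$, which misses $k+i$ purely for size reasons (every nonzero value of $M_i$ other than $k+1,\dots,k+i-1$ exceeds $2k$), and Proposition \ref{keyp} finishes the argument. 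You instead write down $S$ completely explicitly as a union of $k$ square classes and compute the truant set $T(S)$ directly via the uniqueness theorem, replacing the paper's size obstruction by a congruence obstruction: $\pi_m$ divides every value of $\langle a_1,\dots,a_{m-1}\rangle$ but not $a_m$. Your version is more elementary (no reduction theory, no enumeration of isometry classes) and produces a concrete $S$, at the cost of the bookkeeping needed to keep the bases squarefree, increasing, and divisibility-nested; note that squarefreeness is genuinely load-bearing, since if some $a_m$ equalled $a_jt^2$ with $j<m$ and $t\ge 2$ it would fail to be a truant. Two cosmetic remarks: for $m=1$ your witness $F_1$ is empty, but $a_1=\min S$ is a truant for trivial reasons (any lattice of minimum exceeding $a_1$ works), exactly as the paper observes for $s_0$; and your witnesses $F_m$ have scale $\pi_m\z$ rather than $\z$, which technically departs from the paper's blanket convention $\mathfrak s(L)=\z$, but the paper's own witnesses $M_1=\langle k+2\rangle$ and $M_2=\langle k+1\rangle$ do the same, so this is not a real objection.
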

\begin{proof}
Let $L= \z x_{1} + \z x_{2} + \cdots + \z x_{k}$ be a $\z$-lattice such that $Q(x_{i}) = k+i$ for any $i$ with $1 \le i \le k$. 
If $m$ is  the rank of $L$, then we have $m \le k$ and $\mu_{m}(L) \le 2k$.
It follows from $\mu_{1}(L) \le \mu_{2}(L) \le \dots \le \mu_{m}(L)$ that
$$
dL \le \mu_{1}(L) \mu_{2}(L) \cdots \mu_{m}(L) \le (2k)^{k}.
$$
Hence there are only finitely many candidates for $L$ up to isometry since the discriminant and the rank of $L$ are bounded.
Let $\{ L_{1}, L_{2}, \dots, L_{t} \}$ be the set of all possible candidates for $L$. We define
$$
S = \cap_{i=1}^{t} Q(L_{i}).
$$
Then from the definition of $S$, it is obvious that $\{k+1, k+2, \dots, 2k\}$ is an $S$-universality criterion set.

Put $M_{1} = \langle k+2 \rangle$.
Since $k+1$ is not represented by $M_{1}$,  there is a $\z$-lattice $N_{1}$ such that $Q(N_{1}) \cap S = S-\{k+1\}$  by Proposition \ref{keyp}.
For each $i=2,3, \dots, k$, put
$$
M_{i} = \langle k+1, \dots, k+i-1 \rangle.
$$
Then, one may easily show that $k+j  \ra  M_{i}$ for any $j = 1, \dots, i-1$, whereas $k+i$ is not represented by $M_{i}$.
Then, by Proposition \ref{keyp} again, there is a $\z$-lattice $N_{i}$ such that $Q(N_{i}) \cap S = S-\{k+i\}$.
This implies that $\{k+1, k+2, \dots, 2k\}$ is the minimal $S$-universality criterion set.
\end{proof}

Let $\Phi_n$ be the set of all $\z$-lattices of rank $n$. As explained in the introduction, it is known that the minimal $\Phi_n$-universality criterion set is unique if $n=1,2$, or $8$. For all the other positive integers $n$, the explicit minimal $\Phi_n$-universality criterion set is not known yet.

\begin{prop} 
For any integer $n$ with $n \ge 9$, there are infinitely many minimal $\Phi_n$-universality criterion sets.
\end{prop}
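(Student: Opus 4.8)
The plan is to recast everything in terms of \emph{witnesses}, exactly as the truant reformulation was used for the integer case above. Let us call a non-$n$-universal lattice $\ell$ a witness against $K\in\Phi_n$ relative to a subset $S_0\subseteq\Phi_n$ if $\ell$ represents every member of $S_0\setminus\{K\}$ but does not represent $K$. Then $S_0$ is a $\Phi_n$-universality criterion set precisely when every non-$n$-universal lattice fails to represent at least one member of $S_0$, and $S_0$ is minimal precisely when, in addition, each $K\in S_0$ admits a witness against it. With this dictionary, exhibiting infinitely many minimal criterion sets reduces to exhibiting infinitely many finite subsets of $\Phi_n$ that simultaneously catch every non-$n$-universal lattice and are irredundant.

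First I would fix one minimal criterion set $C$, whose existence is guaranteed by the finiteness theorem of \cite{kko}, and single out one element $K_0\in C$ together with a witness $\ell_0$ against it. The goal is to show that $K_0$ can be swapped for any member of an infinite family $\{K_\alpha\}_\alpha$ of rank-$n$ lattices without spoiling either property, so that the sets $C_\alpha:=(C\setminus\{K_0\})\cup\{K_\alpha\}$ are pairwise distinct minimal criterion sets; distinctness is free once the $K_\alpha$ have unbounded discriminant. Since uniqueness is known for $n=8$ by Kominers, the construction must use genuinely new room, and the natural place to find it is the spare dimension available exactly when $n\ge 9$. Concretely I would build the family by grafting a variable piece onto a fixed hard core, for instance an orthogonal summand built from $E_8$ with a single free entry, or an orthogonal sum of the rank-$3$ configuration of Elkies--Kane--Kominers \cite{ekk} with a complement filling the remaining $n-3\ge 6$ dimensions. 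The crux is a \emph{recovery identity}: for every lattice $\ell$ that already represents $C\setminus\{K_0\}$, one needs
\[
\ell \text{ is } n\text{-universal} \iff K_\alpha \ra \ell ,
\]
and this equivalence must hold uniformly for every $\alpha$ in the family. Granting it, each $C_\alpha$ is again a criterion set, since a lattice representing all of $C_\alpha$ represents $C\setminus\{K_0\}$ and $K_\alpha$, hence is $n$-universal.

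For minimality one must furnish, for every member of $C_\alpha$, a non-$n$-universal lattice representing all other members but not that one: the lattice $\ell_0$ serves against $K_\alpha$ as soon as $K_\alpha\nra\ell_0$, while for the inherited members the witnesses can be taken from small modifications of those for $C$. This verification is routine once the family $\{K_\alpha\}$ is chosen concretely, although one does have to check honestly that each adapted witness represents $K_\alpha$, which is the only nontrivial point here.

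The hard part will be the recovery identity and, above all, its uniformity in $\alpha$. Proving that a single lattice $K_\alpha$ forces full $n$-universality of an already-$(C\setminus\{K_0\})$-universal lattice is a local-to-global representation problem of the type solved in \cite{ekk}: one first establishes representability over each $\z_p$ and over $\mathbb R$, and then upgrades local representability to an honest global representation while controlling the finitely many genus and class-number obstructions. Running this argument once is already substantial; arranging the family $\{K_\alpha\}$ so that the same argument succeeds simultaneously for infinitely many $\alpha$, rather than for a single form as in the original example, is the principal obstacle. It is precisely the extra dimension present when $n\ge 9$ that makes such a uniform family available, consistent with the failure of the construction — and the uniqueness of the criterion set — at $n=8$.
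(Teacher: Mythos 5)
Your overall architecture --- fix one minimal criterion set, distinguish a single element $K_0$, and replace it by the members of an infinite family $\{K_\alpha\}$ for which a ``recovery identity'' holds --- is exactly the paper's strategy, and your instinct that the hard core should involve $E_8$ is also on target. But the proposal stops precisely where the proof begins: you never identify $K_0$, never construct the family, and explicitly defer the recovery identity as a ``substantial'' problem to be solved later. Since that identity is the entire content of the proposition, what you have written is a plan rather than a proof. Worse, the tools you reach for (representability over each $\z_p$, upgrading local to global representations, genus and class-number obstructions as in \cite{ekk}) are not the ones that work here; the actual argument is elementary and purely structural, with no local analysis at all.

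Concretely, the paper first shows that any minimal criterion set $\Phi_n^0=\{L_1,\dots,L_s\}$ must contain $I_n$ (otherwise, writing $L_i=I_{k_i}\perp\ell_i$ with $\min(\ell_i)\ge 2$ and $n_0=\max k_i<n$, the lattice $I_{n_0}\perp\ell_1\perp\cdots\perp\ell_s$ represents every $L_i$ but not $I_n$) and must contain some $L_2=D_m[1]\perp M$ with $m\equiv 0\Mod{4}$, $n-4\le m<n$ and $\rank(M)\le 4$. The recovery identity is then: if $\mathcal L$ represents both $I_n$ and $D_m[1]\perp N$, then $\mathcal L\cong I_{n'}\perp\mathcal L'$ with $n'\ge n$ and $\min(\mathcal L')\ge 2$; the norm-$2$ vectors generating $D_m$ cannot split between the two summands, and $D_m[1]$ embeds in no $I_{n'}$, so the whole of $D_m[1]$ lands in $\mathcal L'$ and $I_n\perp D_m[1]\ra\mathcal L$; since $I_n$ is $4$-universal, $M\ra I_n$, hence $L_2\ra\mathcal L$ and $\mathcal L$ is $n$-universal. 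Uniformity in the replacement is then automatic: $N$ may be \emph{any} lattice of rank $n-m$, which immediately yields infinitely many minimal criterion sets $\{I_n,\,D_m[1]\perp N,\,L_3,\dots,L_s\}$. This also pinpoints why $n\ge 9$ is needed --- one must have $m\ge 8$ so that $D_m[1]$ has minimum $2$ and the rigidity above (for $n=8$ the only choice is $m=4$ and $D_4[1]\cong I_4$, so the mechanism collapses, consistent with Kominers' uniqueness) --- rather than the vaguer ``spare dimension'' heuristic you offer. To repair your write-up you would need to supply exactly these two missing ingredients: the forced membership of $I_n$ and of a $D_m[1]$-containing lattice, and the splitting argument replacing your proposed local-to-global analysis.
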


\begin{proof}  Let $\Phi_n^0=\{L_1,L_2,\dots,L_s\}$ be a minimal $\Phi_n$-universality criterion set. Assume that $L_i=I_{k_i} \perp \ell_i$, where $\min(\ell_i)\ge 2$.
If $n_0=\max\{k_i\}<n$, then $I_{n_0} \perp \ell_1\perp\dots\perp\ell_s$   represents all $\z$-lattices in $\Phi_n^0$, but it does not represent $I_n$. This is a contradiction. Therefore $n_0=n$, that is, $I_n \in \Phi_n^0$. Similarly, one may easily show that  there is an integer $j$ such that $L_j$ represents $D_m[1]$ for some integer $m \equiv 0 \Mod 4$ with $n-4\le m<n$.  Note that $L_j=D_m[1]\perp M$ for some $\z$-lattice $M$ with rank less than or equal to $4$. Without loss of generality, assume that $L_1=I_n$ and $L_2=D_m[1]\perp M$. Note that any $\z$-lattice that represents both $L_1$ and $L_2$ should represent $I_n\perp D_m[1]$. Furthermore, since  $I_n$ is $4$-universal, $L_j$ cannot represent $D_m[1]$ for any $j\ge 3$.   
Now we show that for any $\z$-lattice $N$ with rank $n-m$,
$$
\Phi_n^0(N)=\{I_n, D_m[1]\perp N,L_3,\dots, L_s\}
$$
is also a minimal $\Phi_n$-universality criterion set.  Assume that a $\z$-lattice $\mathcal L$ represents all $\z$-lattices in $\Phi_n^0(N)$. 
Since $I_n\perp D_m[1]$ is represented by $\mathcal L$, $L_2=D_m[1]\perp M$ is also represented by $\mathcal L$. Therefore, $\mathcal L$ is $n$-universal from the assumption that $\Phi_n^0$ is a  $\Phi_n$-universality criterion set. By using similar argument, one may easily show that $\Phi_n^0(N)$ is, in fact,  minimal. 
\end{proof}

\begin{rmk}{\rm
We conjecture that 
$$
\left\lbrace I_4, A_4, A_2\perp A_2, A_2\perp \begin{pmatrix} 2&1\\1&3\end{pmatrix}\right\rbrace
$$ 
is the unique minimal $\Phi_{4}$-universality criterion set. Here, $A_m=\z(e_1-e_2)+\z(e_2-e_3)+\dots+\z(e_m-e_{m+1})$ is a root $\z$-lattice, where $\{e_i\}$ is the standard orthonormal basis of the cubic $\z$-lattice $I_n$.}
\end{rmk}

\section{Recoverable $\z$-lattices}
In this section, we introduce the notion of {\it recoverable} $\mathbb{Z}$-lattices and give some properties on those $\mathbb{Z}$-lattices, and we show some necessary conditions and some sufficient conditions for $\z$-lattices to be recoverable.

In \cite{ekk}, Elkies and his collaborators gave an example of a set $S$ of ternary $\z$-lattices such that the sizes of minimal $S$-universality criterion sets vary. To explain their example more precisely, let $T$ be the set of all ternary sublattices of $\langle 1,1,2\rangle$. Then, clearly, $T_0=\{\langle1,1,2\rangle\}$ is a minimal $T$-universality criterion set. 
Furthermore, they proved that 
$$
T_1=\{ \langle 1,1,8\rangle, \langle 2,2,2\rangle\}
$$
is also a minimal $T$-universality criterion set. The point is that any $\z$-lattice that represents both $\langle 1,1,8\rangle$ and $\langle 2,2,2\rangle$, which are  sublattices of $\langle1,1,2\rangle$, also represents $\langle 1,1,2\rangle$ itself. 
From this point of view, the following definition seems to be quite natural:

\begin{defn}  
Let $\ell$ be a $\z$-lattice and let $S_0=\{\ell_1,\ell_2,\dots,\ell_t\}$ be the set of proper $\z$-sublattices of $\ell$. We say $\ell$ is {\it recoverable by $S_0$} if any $S_0$-universal $\z$-lattice represents $\ell$ itself.   
\end{defn}

Note that the ternary $\z$-lattice $\langle 1,1,2 \rangle$ is recoverable by $T_{1}$. 
We simply say $\ell$ is {\it recoverable} if there is a finite set of proper sublattices satisfying the above property. Note that if $\ell$ is recoverable, then there is a minimal $S$-universality criterion set whose cardinality is greater than $1$, where $S$ is the set of all sublattices of $\ell$.

\begin{lem} \label{not-recover} 
A $\z$-lattice $\ell$ is not recoverable  if and only if there is a $\z$-lattice that represents all proper sublattices of $\ell$, but not $\ell$ itself. 
\end{lem}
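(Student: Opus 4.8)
The plan is to prove the two implications separately, observing that one is purely formal while the other rests on the finiteness theorem of \cite{kko}. Throughout, write $S$ for the set of all proper sublattices of $\ell$; since every member of $S$ has rank at most $\rank(\ell)$, this is a set of $\z$-lattices of bounded rank, which is the feature that lets us call on \cite{kko}.

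For the implication asserting that the existence of such a $\z$-lattice forces $\ell$ to be non-recoverable, I would argue directly from the definitions. Suppose $M$ represents every proper sublattice of $\ell$ but does not represent $\ell$ itself. Then for an \emph{arbitrary} finite set $S_0\subseteq S$ the lattice $M$ is $S_0$-universal, since it represents all members of $S\supseteq S_0$, yet $M$ does not represent $\ell$. Hence no finite set of proper sublattices can recover $\ell$, so $\ell$ is not recoverable. This direction needs no auxiliary input.

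For the converse, suppose $\ell$ is not recoverable. Here I would invoke \cite{kko} for the set $S$: because $S$ has bounded rank, it admits a \emph{finite} $S$-universality criterion set $S_1\subseteq S$, so that every $S_1$-universal $\z$-lattice represents all proper sublattices of $\ell$. Now $S_1$ is a finite set of proper sublattices, and $\ell$ is not recoverable, so in particular $\ell$ is not recoverable by $S_1$; that is, there exists an $S_1$-universal $\z$-lattice $M$ that does not represent $\ell$. By the criterion-set property, $M$ being $S_1$-universal forces $M$ to represent every element of $S$, namely every proper sublattice of $\ell$, while $M$ still fails to represent $\ell$. This $M$ is exactly the $\z$-lattice required by the statement.

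The only non-formal step, and hence the main obstacle, is producing a \emph{single} $\z$-lattice that simultaneously represents all of the (in general infinitely many isometry classes of) proper sublattices of $\ell$ while avoiding $\ell$ itself. I expect to dispatch precisely this compactness-type passage—from a witness for each finite subcollection to one witness for the entire collection—by the finiteness theorem of \cite{kko}; the key observation that makes it applicable is simply that $\rank$ is bounded on $S$.
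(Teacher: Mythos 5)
Your proof is correct and follows essentially the same route as the paper: the nontrivial direction is handled by taking a finite $S$-universality criterion set for the set $S$ of proper sublattices (guaranteed by \cite{kko}), applying non-recoverability to that finite set to get a witness lattice, and then using the criterion-set property to upgrade it to a lattice representing all of $S$ but not $\ell$. The only cosmetic difference is that the paper picks a \emph{minimal} criterion set, which is not actually needed, exactly as your argument shows.
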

\begin{proof} 
Suppose that $\ell$ is not recoverable. Let $S$ be the set of all proper sublattices of $\ell$ and let $S_{0}=\{\ell_1,\ell_2,\dots,\ell_t\}$ be a minimal $S$-universality criterion set. Since we are assuming that $\ell$ is not recoverable, there is a $\z$-lattice $L$ that represents all $\z$-lattices in $S_0$, whereas $L$ does not represent $\ell$ itself.  Now, since the set $S_0$ is an $S$-universality criterion set, $L$ represents all proper sublattices of $\ell$, but not $\ell$ itself.  The converse is trivial.
\end{proof}

\begin{lem}  \label{scaling} 
Let $\ell$ be a $\z$-lattice and let $a$ be a positive integer. If $\ell^a$ is recoverable, then so is $\ell$. 
\end{lem}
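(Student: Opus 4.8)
The plan is to prove the contrapositive: if $\ell$ is not recoverable, then neither is $\ell^a$. By Lemma \ref{not-recover}, it suffices to produce, from a $\z$-lattice witnessing the non-recoverability of $\ell$, a $\z$-lattice witnessing that of $\ell^a$. So first I would invoke Lemma \ref{not-recover} to obtain a $\z$-lattice $L$ that represents every proper sublattice of $\ell$ but does not represent $\ell$ itself, and I would propose the scaled lattice $L^a$ as the corresponding witness for $\ell^a$.

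The crux is the interaction between scaling and the sublattice/representation relations. The key observation is that scaling by $a$ leaves the underlying $\z$-module unchanged and only multiplies the bilinear form by $a$; hence the sublattices of $\ell^a$ are exactly the lattices $K^a$ with $K$ a sublattice of $\ell$, and this correspondence $K \mapsto K^a$ is a rank- and index-preserving bijection, so it carries proper sublattices to proper sublattices and conversely. Moreover, representation is compatible with scaling: an isometric embedding $\sigma \colon K \to L$ satisfies $B(\sigma x, \sigma y) = B(x,y)$ for all $x,y \in K$, and multiplying both sides by $a$ shows that the same map $\sigma$ is an isometric embedding $K^a \to L^a$; the identical computation, read in reverse, shows that any isometric embedding $\ell^a \to L^a$ restricts to one $\ell \to L$.

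With these two facts in hand, I would verify the two required properties of $L^a$. First, given an arbitrary proper sublattice $M$ of $\ell^a$, I write $M = K^a$ with $K$ a proper sublattice of $\ell$; since $L$ represents $K$, scaling gives that $L^a$ represents $M$. Second, if $L^a$ were to represent $\ell^a$, the reverse direction of the scaling computation would force $L$ to represent $\ell$, contradicting the choice of $L$. Thus $L^a$ represents all proper sublattices of $\ell^a$ but not $\ell^a$ itself, and a final application of Lemma \ref{not-recover} yields that $\ell^a$ is not recoverable, completing the contrapositive.

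I expect no serious obstacle here; the only point demanding care is the claim that every proper sublattice of $\ell^a$ actually arises as $K^a$ for a proper sublattice $K$ of $\ell$, which rests on the fact that scaling is an invertible operation on the fixed underlying module, so that no sublattices are lost or created in passing between $\ell$ and $\ell^a$. Everything else is the routine functoriality of scaling with respect to isometric embeddings.
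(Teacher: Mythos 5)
Your argument is correct. It does, however, take a different logical route from the paper's. You prove the contrapositive and funnel everything through Lemma \ref{not-recover}: from the non-recoverability of $\ell$ you extract (via the nontrivial direction of that lemma, which rests on the finiteness theorem of \cite{kko}) a single witness $L$ representing every proper sublattice of $\ell$ but not $\ell$, scale it to $L^a$, and then apply the easy direction of Lemma \ref{not-recover} to $\ell^a$. This forces you to justify the additional claim that \emph{every} proper sublattice of $\ell^a$ has the form $K^a$ for a proper sublattice $K$ of $\ell$ — true, since scaling fixes the underlying module, but a step the paper never needs. The paper instead argues directly from the definition: if $\ell^a$ is recoverable by $\{\ell_1^a,\dots,\ell_t^a\}$, then for any $M$ representing each $\ell_i$ one has $\ell_i^a \ra M^a$, hence $\ell^a \ra M^a$, hence $\ell \ra M$, so $\ell$ is recoverable by $\{\ell_1,\dots,\ell_t\}$. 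That direct version is shorter, avoids Lemma \ref{not-recover} entirely, and has the side benefit of exhibiting an explicit recovering set for $\ell$; your version buys nothing extra here, but it is a legitimate and complete proof, with the one point needing care (the sublattice correspondence under scaling) correctly identified and handled.
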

\begin{proof}
Assume that $\ell^a$ is recoverable by $\{\ell^a_1,\ell^a_2,\dots,\ell^a_t\}$, where $\ell_i$ is a proper sublattice of $\ell$ for any $i=1,2,\dots,t$.  Let $M$ be any $\z$-lattice that represents $\ell_i$ for any $i$. Then $\ell_i^a \ra M^a$ for any $i$, and hence $\ell^a \ra M^a$. Therefore $\ell \ra M$ and $\ell$ is recoverable by $\{\ell_1,\ell_2,\dots,\ell_t\}$ .  
\end{proof}

\begin{rmk}{\rm
Any unary $\z$-lattice $\ell$ cannot be recoverable. Let $\ell=\langle 1 \rangle$. Note that $\langle 2,2,5 \rangle$ represents all squares of integers except for $1$ (see \cite{jko}). Hence $\langle 2,2,5 \rangle$ represents all proper sublattices of $\ell$, but not $\ell$ itself. Therefore $\ell$ is not recoverable by Lemma \ref{not-recover}. Moreover, since every unary $\z$-lattice can be obtained from $\ell$ by a suitable scaling, it is not recoverable by Lemma \ref{scaling}.}
\end{rmk}

\begin{rmk} {\rm
Note that the converse of the above lemma does not hold in general. Let $\ell=\langle1,4\rangle$ be the binary $\z$-lattice. Let $L$ be any $\z$-lattice representing both $\ell_1=\langle1,16\rangle$ and $\ell_2=\langle4,4\rangle$. Since $L$ represents $\ell_1$,   there is a vector $e_1 \in L$ and a $\z$-sublattice $L_1$ of $L$ such that $L=\z e_1+L_1$, where $Q(e_1)=1$ and $B(e_1,L_1)=0$. Furthermore, since $L$ represents $\ell_2 = \langle4,4\rangle$, there are nonnegative integers $a,b$ and vectors $x,y \in L_1$ such that 
$$
Q(ae_1+x)=a^{2}+Q(x)=Q(be_1+y)=4 \quad \text{and} \quad B(ae_1+x,be_1+y)=0.
$$
If $a=2$, then $x=0$ and $b=0$. Hence $\langle4\rangle \ra L_1$.
If $a=1$, then 
$$
b=0 \quad \text{and}\quad Q(y)=4  \quad\text{or}\quad
b=1, \ Q(x)=Q(y)=3, \ \text{and}\ B(x,y)=-1.
$$
For the latter case, we have $Q(x+y)=4$. Finally, if $a=0$, then $Q(x)=4$. Therefore $L_1$ represents $4$ in any case, which implies that $L$ represents $\ell$. Hence
$\ell$ is recoverable by $\{\ell_1,\ell_2\}$. 

Now, we show that $\ell^2=\langle2,8\rangle$ is not recoverable. To show this, let $S$ be the set of all binary $\z$-lattices with minimum greater than or equal to $9$, and let $S_0=\{\mathfrak m_1,\dots,\mathfrak m_t\}$ be a finite minimal $S$-universality criterion set. Then $\mathfrak m_1\perp \cdots\perp \mathfrak m_t$ represents all binary $\z$-lattices with minimum greater than or equal to $9$.  Now, we define
$$
L=K \perp  \mathfrak m_1\perp \cdots\perp \mathfrak m_t,
\quad
\text{where}
\quad
K=\begin{pmatrix} 2&1&1&0\\1&8&0&0\\1&0&8&4\\0&0&4&10\end{pmatrix}.
$$
Clearly, $\ell^2=\langle2,8\rangle$ is not represented by $L$. We show that any proper sublattice of $\ell^2$ is represented by $L$. 
Let $\ell_{3}$ be any proper sublattice of $\ell^2$. If $\min(\ell_3) \ge 9$, then $\ell_3$ is represented by $\mathfrak m_1\perp\cdots\perp \mathfrak m_t$. Hence we may assume that $\min(\ell_3)=2$ or $8$. For the former case, we have $\ell_3 \simeq \langle2,8m^2\rangle$ for some integer $m\ge2$. Since $\langle 8m^2\rangle \ra \mathfrak m_1\perp \cdots\perp \mathfrak m_t$,  $L$ represents $\ell_3$. For the latter case, one may easily check that $\ell_3$ is isometric to one of the binary $\z$-lattices
$$
\langle 8,2m^2\rangle \quad  \text{and} \quad \begin{pmatrix} 8&4\\4&2+8n^2\end{pmatrix},
$$     
where $m\ge2$ and $n\ge1$. Note that $K$ represents the binary $\z$-lattice $\ell_3$ for $m=2$ or  $n=1$. If $m\ge3$ or $n\ge2$, then one may easily show that 
$$
\langle 8,2m^2\rangle \ra \langle 8,8, 2m^2-8\rangle \ra L \ \ \text{or} \ \ \begin{pmatrix} 8&4\\4&2+8n^2\end{pmatrix} \ra \begin{pmatrix}8&4\\4&10\end{pmatrix} \perp \langle 8n^2-8\rangle \ra L. 
$$
 Therefore $L$ represents all proper sublattices of $\ell^2$, but not $\ell^2$ itself. Consequently, $\ell^2$ is not recoverable.}  
\end{rmk}

One may easily check that every additively indecomposable $\z$-lattice is not recoverable.
We further prove that every indecomposable $\z$-lattice $L$ with rank less than $4$ is not recoverable.

\begin{prop}\label{bin}
Any indecomposable binary $\z$-lattice is not recoverable.
\end{prop}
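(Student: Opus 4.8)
The plan is to apply Lemma \ref{not-recover}: it suffices to construct, for a given indecomposable binary $\z$-lattice $\ell$, a single $\z$-lattice $L$ that represents every proper sublattice of $\ell$ but does not represent $\ell$ itself. Normalizing $\ell$ to its reduced Gram matrix $\begin{pmatrix} a & b \\ b & c\end{pmatrix}$ with $0 < 2|b| \le a \le c$ and reduced basis $u, w$ (so $Q(u)=a$, $Q(w)=c$), indecomposability is exactly the condition $b \ne 0$, which will be the only place the hypothesis is used.

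First I would fix an integer $N > c$ and let $S$ be the set of all binary $\z$-lattices of minimum at least $N$. By the finiteness result of \cite{kko} there is a finite $S$-universality criterion set $\{\mathfrak m_1,\dots,\mathfrak m_t\}$, so that $\mathfrak M := \mathfrak m_1 \perp \cdots \perp \mathfrak m_t$ represents every binary lattice of minimum $\ge N$, exactly as in the remark preceding this proposition. I then set $L = K \perp \mathfrak M$, where $K = \perp_{F\in\mathcal F} F$ and $\mathcal F$ is a finite set of proper \emph{rank-two} sublattices of $\ell$ to be specified. The role of the threshold $N > c$ is a clean dichotomy: since both successive minima of $\ell$ are $< N \le \min(\mathfrak M)$, any isometric embedding of $\ell$ into $L$ must send $\ell$ entirely into the $K$-component, so that $\ell \ra L$ if and only if $\ell \ra K$.

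For non-representation, each summand $F$ of $K$ is a proper rank-two sublattice of $\ell$, hence $\min(F) \ge a$ and $dF \ge 4\,d\ell$. If $\ell \ra K$ via $\phi$, then $\phi(u)$, having norm $a=\min(\ell)$, must lie in a single summand $F$; since $b \ne 0$ the component $w_0$ of $\phi(w)$ in that same $F$ is nonzero with $B(\phi(u),w_0)=b$ and $Q(w_0)\le c$, and $0<|b|<a$ forces $\phi(u),w_0$ to be linearly independent. Then $\z\phi(u)+\z w_0 \subseteq F$ has discriminant $aQ(w_0)-b^2 \le ac-b^2 = d\ell$, yet as a sublattice of $F$ its discriminant is $\ge dF \ge 4\,d\ell$, a contradiction. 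For representation I must arrange that $L$ represents every proper sublattice $\ell'$: those of minimum $\ge N$ go into $\mathfrak M$; the unary sublattices and the rank-two sublattices whose two successive minima are both $< N$ are finite in number and are placed directly into $\mathcal F$. The delicate family is the rank-two sublattices of small minimum $m' < N$ but arbitrarily large second minimum $s$; after reduction such a lattice is $\begin{pmatrix} m' & r \\ r & s\end{pmatrix}$ with $|r|\le m'/2$, and I would represent it by splitting off a bounded piece, $\begin{pmatrix} m' & r \\ r & s\end{pmatrix} \ra \begin{pmatrix} m' & r \\ r & c'\end{pmatrix} \perp \langle s-c'\rangle$, sending $\langle s-c'\rangle$ into $\mathfrak M$ and the binary piece into a summand of $K$.

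The main obstacle is producing, for each of the finitely many shapes $(v_1,r)$ with $Q(v_1)=m'<N$ and $r\in B(v_1,\ell)$, a \emph{proper} rank-two sublattice $F_{v_1,r}$ of $\ell$ of bounded discriminant realizing the inner product $r$ with $v_1$ --- proper so that the non-representation argument applies, and of bounded discriminant so that finitely many suffice. I would obtain $F_{v_1,r}$ by pairing $v_1$ with a short vector $v_2'$ in the coset $\{x\in\ell : B(v_1,x)=r\}$; since the index $[\ell:\z v_1+\z v_2']$ equals $1$ for at most two values of the free parameter along $v_1^{\perp}\cap\ell$, a bounded adjustment of $v_2'$ makes the index $\ge 2$ while keeping $Q(v_2')$ bounded in terms of $\ell$ and $N$. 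Taking these $F_{v_1,r}$ (with $r=0$ also handling the unary sublattices) together with the finitely many small-minimum sublattices as the summands of $K$, and invoking Lemma \ref{not-recover}, completes the argument.
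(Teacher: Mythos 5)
Your argument is correct, and its core --- the discriminant contradiction showing that an orthogonal sum of proper rank-two sublattices of $\ell$, padded by lattices of large minimum, cannot represent $\ell$ once $b=B(u,w)\neq 0$ --- is essentially the same computation the paper performs. Where you genuinely diverge is in how $L$ is made to represent every proper sublattice of $\ell$. The paper takes $L=\ell_1\perp\cdots\perp\ell_t$ where $\{\ell_1,\dots,\ell_t\}$ is a finite minimal universality criterion set (via \cite{kko}) for the set of \emph{all} proper sublattices of $\ell$; such an $L$ represents every proper sublattice by the very definition of a criterion set, so the entire representation half of your proof --- the threshold $N$, the lattice $\mathfrak M$, the case split on successive minima, and the construction of the proper sublattices $F_{v_1,r}$ of bounded discriminant realizing prescribed inner products --- becomes unnecessary. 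Since every summand of the paper's $L$ is itself a proper sublattice of $\ell$, the same index inequality $d(\z x_1+\z(\alpha x_1+\beta y_1+z))\ge d\ell_1>d\ell$ finishes the proof in a few lines. What your route buys is explicitness: it is exactly the style the paper adopts in Section 4 (Propositions \ref{2leab}, \ref{odd}, \ref{2m4}), where no criterion set is available for free. The price is the coset/index bookkeeping needed to make each $F_{v_1,r}$ proper, which is the one place your write-up is only sketched; the quadratic-in-$k$ count you indicate does go through, since $Q(v_2'+kv_0)$ takes the index-one value for at most two integers $k$. Two small points to make explicit: the linear independence of $\phi(u)$ and $w_0$ uses that $\phi(u)$, having norm $\min(\ell)$, is primitive in $\ell$, so $w_0=\lambda\phi(u)$ would force $\lambda\in\z$ and $|b|=|\lambda|a\ge a$; and the unary proper sublattices $\langle d\rangle$ with $d<N$ must be covered by the $F_{v_1,0}$, as you note parenthetically, since they are not rank-two and cannot themselves be summands of $K$.
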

\begin{proof}
Let $\ell$ be an indecomposable binary $\z$-lattice. Let $\{x,y\}$ be a Minkowski-reduced (ordered) basis for $\ell$, that is, $0 \le 2\vert B(x,y)\vert \le Q(x) \le Q(y)$. Let $S$ be the set of all proper sublattices of $\ell$, and let $S_{0}=\{\ell_{1},\ell_{2},\dots,\ell_{t}\}$ be a minimal $S$-universality criterion set.
If we define $L=\ell_1\perp\dots\perp \ell_t$, then $L$ represents all proper sublattices of $\ell$.  Hence it suffices to show that $L$ does not represent $\ell$ itself. To do this, let $\ell_i=\z x_i+\z y_i$, where $\{x_i,y_i\}$ is a Minkowski reduced basis for $\ell_i$ for any $i=1,2,\dots,t$.  Suppose on the contrary that there is a representation $\phi:\ell \rightarrow L$. Since $\ell_{i}$ is a sublattice of $\ell$ for any $i=1,2,\dots,t$, we may assume, without loss of generality, that $\phi(x)=x_{1}$.  Suppose that $\phi(y) = \alpha x_{1} + \beta y_{1} + z$, where $\alpha,\beta \in \z$ and $z \in  (\z x_{2}+\z y_{2}) \perp \cdots \perp (\z x_{t}+\z y_{t})$.
Since $\ell$ is indecomposable, $\beta$ cannot be zero.
Then, we have
$$
d\ell=d(\phi(\ell)) = d(\z x_{1}+\z (\alpha x_{1} + \beta y_{1} + z)) \ge d(\z x_{1}+\z (\alpha x_{1} + \beta y_{1})) \ge d\ell_{1} > d\ell,
$$
which is a contradiction. Therefore $L$ does not represent $\ell$ and hence, by lemma \ref{not-recover}, the binary $\z$-lattice $\ell$ is not recoverable.
\end{proof}

\begin{prop}
Any  indecomposable ternary $\z$-lattice is not recoverable.
\end{prop}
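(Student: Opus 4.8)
The plan is to mirror the proof of Proposition \ref{bin}. Let $\ell$ be an indecomposable ternary $\z$-lattice, let $S$ be the set of all proper sublattices of $\ell$, let $S_0=\{\ell_1,\dots,\ell_t\}$ be a minimal $S$-universality criterion set, and put $L=\ell_1\perp\dots\perp\ell_t$. Then $L$ represents every proper sublattice of $\ell$, so by Lemma \ref{not-recover} it suffices to show $L\nra \ell$. Suppose, to the contrary, that there is a representation $\phi:\ell\ra L$, and write $N=\phi(\ell)$, so that $N$ is indecomposable of rank $3$ with $dN=d\ell$. Since each $\ell_i$ is a sublattice of $\ell$, we have $\min(\ell_i)\ge\min(\ell)$, and hence $\min(L)=\min(\ell)$. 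Consequently the image of a minimal vector of $\ell$ is a minimal vector of $L$, which must lie in a single orthogonal summand; after relabeling, $x_1:=\phi(v_1)\in\ell_1$ with $Q(x_1)=\min(\ell)=\min(\ell_1)$.

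The key invariant is $r:=\rank(\pi_1(N))$, where $\pi_1:L\ra\ell_1$ is the projection onto the first summand; note $\pi_1(N)\subseteq\ell_1$ and $x_1\in\pi_1(N)$, so $1\le r\le 3$. I would treat the three cases separately. If $r=1$, then $\pi_1(N)$ must be generated by $x_1$ itself: were $\pi_1(N)=\z g$ with $x_1=mg$ and $m\ge 2$, the vector $g\in\ell_1$ would satisfy $Q(g)<Q(x_1)=\min(\ell_1)$, which is impossible. Hence every vector of $N$ differs from a multiple of $x_1$ by an element of $N\cap L'$, where $L'=\ell_2\perp\dots\perp\ell_t$, so $N=\z x_1\perp(N\cap L')$ splits off $\langle Q(x_1)\rangle$, contradicting the indecomposability of $N$. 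If $r=3$, then $\pi_1(N)$ is a full-rank sublattice of $\ell_1$, so $\ell_1$ is ternary and $d(\pi_1(N))\ge d\ell_1>d\ell$ because $\ell_1$ is a proper full-rank sublattice of $\ell$. Writing the Gram matrix of a basis of $N$ as $G_p+G_q$, where $G_p$ and $G_q$ are the (positive semidefinite) Gram matrices of its $\ell_1$- and $L'$-components, positive definiteness of $G_p$ in this case together with the inequality $\det(G_p+G_q)\ge\det(G_p)$ gives $d\ell=dN\ge d(\pi_1(N))>d\ell$, a contradiction.

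The remaining case $r=2$ is where the ternary argument genuinely departs from the binary one, and I expect it to be the main obstacle. Here $N\cap L'$ has rank $1$, say $N\cap L'=\z w$ with $w$ primitive in $N$; since $w=\phi(u)$ for some $u\in\ell$ we have $Q(w)\ge\min(\ell)$, while $\pi_1(N)$ is a binary sublattice of $\ell_1$ (hence of $\ell$) containing the minimal vector $x_1$. Applying the standard discriminant formula for the orthogonal projection along a primitive rank-one sublattice, together with the fact that this projection only increases norms relative to $\pi_1$, yields
$$
d\ell=dN\ge Q(w)\cdot d(\pi_1(N)).
$$
The difficulty is that the crude bounds $Q(w)\ge\min(\ell)$ and $d(\pi_1(N))\ge\tfrac34\min(\ell)^2$ (the binary Hermite inequality) are not by themselves strong enough to force the strict inequality $Q(w)\,d(\pi_1(N))>d\ell$, since they are all compatible with the Hermite bounds for a ternary lattice. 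To close this case I would sub-divide according to whether $\ell_1$ has rank $2$ or $3$ and track the index $[\ell_1:\pi_1(N)]$ (when $\ell_1$ is binary) or the choice of binary section (when $\ell_1$ is ternary), and I would re-invoke the minimality and primitivity arguments—now applied to the vector $w$ inside $L'$ and to the binary lattice $\pi_1(N)$—to extract an additional index factor or a sharper lower bound on $Q(w)$ or $d(\pi_1(N))$ that beats $d\ell$. Establishing this refined estimate, using the indecomposability of $\ell$ to rule out the borderline configurations, is the crux of the proof.
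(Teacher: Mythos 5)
Your setup is sound, and your cases $r=1$ and $r=3$ are correct: the rank-one case forces $N=\z x_1\perp(N\cap L')$, contradicting indecomposability, and the full-rank case gives $d\ell = dN\ge d(\pi_1(N))\ge d\ell_1>d\ell$. But the proof is not complete: the case $r=2$, which you yourself identify as the crux, is left as a programme rather than an argument. Your inequality $d\ell=dN\ge Q(w)\,d(\pi_1(N))$ is correct, but, as you observe, the best a priori bounds give only $Q(w)\,d(\pi_1(N))\ge \tfrac34\mu_1(\ell)^2\mu_2(\ell)$, while $d\ell$ can be as large as $\mu_1(\ell)\mu_2(\ell)\mu_3(\ell)$; since an indecomposable ternary lattice can have $\mu_3$ far larger than $\mu_1$, no determinant count of this shape can ever beat $d\ell$, and the "additional index factor or sharper lower bound" you hope to extract is never produced. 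The borderline configurations must be excluded by a structural argument, not an improved estimate, so there is a genuine gap.

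For comparison, the paper does not stratify by the rank of the projection onto one summand. It fixes a Minkowski-reduced basis $\{u,v,w\}$ of the indecomposable lattice and a Minkowski-reduced basis $\{x_1,x_2,x_3\}$ of the summand $L_1$ with $\phi(u)=x_1$, writes $\phi(v)=a_1x_1+x+y$ with $x\in\z x_2+\z x_3$ and $y\in L_2\perp\cdots\perp L_t$, and splits on $x=0$ versus $x\ne0$. The key tool is the monotonicity $\mu_j(L_1)\ge\mu_j(L)$ for a full-rank sublattice, which in chains such as $\mu_j(L)=Q(\phi(\cdot))=Q(\text{component in }L_1)+Q(z)\ge\mu_j(L_1)+Q(z)\ge\mu_j(L)+Q(z)$ repeatedly forces the components outside $L_1$ to vanish; the residual coefficients are then pinned down by explicit Gram-matrix computations (e.g. showing $a_2=\pm1$), and in every branch $\phi(L)$ comes out decomposable. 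This successive-minima bookkeeping is exactly the refined information your single determinant inequality discards, and it is what closes your open case.
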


\begin{proof}
Suppose that there is a ternary $\z$-lattice, say $L$, that is recoverable. Then there are proper sublattices $L_{1}, L_{2}, \dots, L_{t}$ of $L$ such that $L$ is represented by $L_{1} \perp L_{2} \perp \cdots \perp L_{t}$. 
Without loss of generality,  we may assume that all $L_{i}$'s are of rank 3. Let 
$$
\phi : L \ra L_{1} \perp L_{2} \perp \cdots \perp L_{t}
$$
be a representation. Let $\{ u,v,w \}$ be a Minkowski reduced (ordered) basis for $L$. Without loss of generality, we may assume that $\phi(u)=x_{1} \in L_1$. Clearly, there exists a Minkowski reduced basis for $L_{1}$, say  $\{ x_{1}, x_{2}, x_{3} \}$, containing $x_{1}$.
 Assume that 
$$
\phi(v) = a_{1}x_{1}+x+y,
$$
where $a_1\in \z$, $x \in \z x_{2}+ \z x_{3}$ and $y \in L_{2} \perp \cdots \perp L_{t}$.

First, assume that $x=0$.
Since 
$$
2|a_{1}| Q(x_{1}) =2|B(x_{1}, a_{1}x_{1}+y)|=2\vert B(u,v)\vert\le Q(u)=Q(x_{1}),
$$
we have $a_{1}=0$.
Put 
$$
\phi(w) = b_{1}x_{1}+b_{2}x_{2}+b_{3}x_{3}+z,
$$
where $b_i\in \z$ for any $i=1,2,3$ and $z \in L_{2} \perp \cdots \perp L_{t}$.
Then, we have
$$
\phi(L)= \z x_1 +\z y +\z (b_{1}x_{1}+b_{2}x_{2}+b_{3}x_{3}+z).
$$
If $b_{3} \ne 0$, then
$$
\begin{array}{lll}
\mu_{3}(L) = Q(b_{1}x_{1}+b_{2}x_{2}+b_{3}x_{3}+z)\!\!\!&= Q(b_{1}x_{1}+b_{2}x_{2}+b_{3}x_{3})+Q(z)\\[0.2cm]
                                                  \!\!\!&\ge \mu_{3}(L_{1}) +Q(z) \ge \mu_{3}(L)+Q(z),
\end{array}
$$
which implies that $z=0$.
Hence, $\phi(L)$ is decomposable, which is a contradiction.
Therefore, we have $b_{3}=0$.
Observe that 
$$
L_{1} = \z x_{1} + \z x_{2} + \z x_{3} \subseteq L=\z u +\z v+\z w \simeq \phi(L)
$$
Then, $b_{1}x_{1} + b_{2}x_{2} = \alpha u + \beta v + \gamma w$ for some integers $\alpha, \beta$ and $\gamma$.
If $\gamma \ne 0$, then 
$$
\mu_{3}(L) = Q(b_{1}x_{1}+b_{2}x_{2})+Q(z) \ge Q(w)+Q(z) = \mu_{3}(L)+Q(z).
$$
This implies that $z=0$, which is a contradiction.
Hence, $b_{1}x_{1} + b_{2}x_{2} = \alpha u + \beta v$.
Similarly, we have $x_{1} = \alpha_{1} u + \beta_{1} v$ for some integers $\alpha_1$ and $\beta_1$. 
Since $Q(x_{1})=Q(u)$ and $B(u,v)=0$, we have $x_{1} = \pm u$ and $b_{1}x_{1} + b_{2}x_{2} = \beta v$  or $x_{1} = \pm v$ and $b_{1}x_{1} + b_{2}x_{2} = \alpha u$.
In any case, $\phi(L)$ is decomposable, which is a contradiction.

Finally, assume that $x \ne 0$.
Since
$$
\begin{array}{lll}
\mu_{2}(L) = Q(v)=Q(a_{1}x_{1}+x+y) \!\!\!&= Q(a_{1}x_{1}+x)+Q(y)\\[0.2cm]
                                    \!\!\!& \ge  \mu_{2}(L_{1}) +Q(y) \ge \mu_{2}(L) +Q(y),
\end{array}
$$
we have $y=0$.
Put 
$$
\phi(v) = a_{1}x_{1}+a_{2}x_{2}+a_{3}x_{3} \quad \mbox{and}\quad \phi(w) = b_{1}x_{1}+b_{2}x_{2}+b_{3}x_{3}+z
$$
where $a_i,b_i\in \z$ for any $i=1,2,3$ and $z \in L_{2} \perp \cdots \perp L_{t}$.
If $b_{3} \ne 0$, then 
$$
\mu_{3}(L) = Q(b_{1}x_{1}+b_{2}x_{2}+b_{3}x_{3})+Q(z) \ge \mu_{3}(L_{1})+Q(z) \ge \mu_{3}(L)+Q(z).
$$
This implies that $z=0$. Then $\phi(L) \subseteq L_{1}$, which is a contradiction.
Hence, $b_{3}=0$.
Suppose that $a_{3} \ne 0$.
Since 
$$
\mu_{2}(L)=Q(a_{1}x_{1}+a_{2}x_{2}+a_{3}x_{3}) \ge \mu_{3}(L_{1}) \ge \mu_{3}(L),$$
we have $\mu_{2}(L)= \mu_{3} (L) = \mu_{2}(L_{1}) = \mu_{3}(L_{1})$. Then, we have
$$
\mu_{2}(L) = \mu_{3} (L) = Q(b_{1}x_{1}+b_{2}x_{2}+z) \ge \mu_{2}(L_{1})+Q(z) = \mu_{2}(L)+Q(z).
$$
This implies that $z=0$, which is a contradiction.
Therefore $a_{3} = 0$. Since $a_{2} \ne 0$, we have 
$$
\mu_{2}(L) = Q(a_{1}x_{1}+a_{2}x_{2}) \ge \mu_{2}(L_{1}) =Q(x_{2}) \ge \mu_{2}(L).
$$
This means that $Q(a_{1}x_{1}+a_{2}x_{2})=Q(x_{2})$.
Let 
$$
\z x_{1}+\z x_{2} = \begin{pmatrix} s & r \\ r & t \end{pmatrix}.
$$
Then, we have
$$
t=a_{1}^{2}s+2a_{1}a_{2}r+a_{2}^{2}t=s \left( a_{1} + \frac{ra_{2}}{s} \right)^{2} +a_{2}^{2} \left( t-\frac{r^{2}}{s} \right)  \ge a_{2}^{2} \left( t-\frac{s}{4} \right).
$$
If $|a_{2}| \ge 2$, then $t \ge 4t-s>t$, which is a contradiction.
Hence, we have $a_{2} = \pm 1$. Then $\phi(L) = \z x_{1}+\z x_{2} +\z z$ is decomposable, which is a contradiction. This completes the proof.
\end{proof}

\section{Recoverable binary $\mathbb{Z}$-lattices}

In this section, we focus on recoverable binary $\z$-lattices.
We find some necessary conditions and some sufficient conditions for binary $\z$-lattices to be recoverable.

Let $n$ be a positive integer and let $S$ be the set of all binary $\z$-lattices with minimum greater than or equal to $n$. Then there is a finite minimal $S$-universality criterion set $S_{n}=\{m_{1},\dots,m_{t}\}$ by \cite{kko}.  If we define $M=m_{1}\perp \cdots\perp m_{t}$, then $M$ represents all binary $\z$-lattices with minimum greater than or equal to $n$.
In this section, $\MM{n}$ stands for a $\z$-lattice representing all binary $\z$-lattices with minimum greater than or equal to $n$ and $\min(\MM{n})=n$.
From the above argument, such a $\z$-lattice always exists.

\begin{prop}\label{2leab}
Let  $a$ and $b$ be positive integers such that $2 \le a < b$ and $a$ does not divide $b$.  Then the diagonal binary $\z$-lattice $\ell=\langle a, b \rangle $ is not recoverable.
\end{prop}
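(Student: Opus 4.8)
The plan is to apply Lemma \ref{not-recover} and exhibit a single $\z$-lattice $L$ that represents every proper sublattice of $\ell=\langle a,b\rangle$ but not $\ell$ itself. The organizing observation is that $\langle a,b\rangle \ra L$ holds if and only if $L$ contains two orthogonal vectors of norms $a$ and $b$, so the whole construction is built so as to represent all the sublattices while never producing such an orthogonal pair. Here the hypothesis $a\nmid b$ enters twice. First, together with $a<b$, it forces the only norm-$a$ vectors of $\ell$ to be $\pm e_1$ and the only norm-$b$ vectors to be $\pm e_2$ (the equation $as^2=b$ being unsolvable). Second, it guarantees that $\langle 2a,2a,5a\rangle=\langle 2,2,5\rangle^a$ does not represent $b$, since $a(2x^2+2y^2+5z^2)=b$ would force $a\mid b$; recall from the earlier Remark that $\langle 2,2,5\rangle$ represents every square except $1$.

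Next I would classify the proper sublattices by their minimum. A proper sublattice of minimum $a$ must contain $e_1$, hence equals $\langle a,q^2b\rangle$ for some $q\ge 2$; one of minimum $b$ must contain $e_2$, hence equals $\langle p^2a,b\rangle$ for some $p\ge 2$. Every remaining proper sublattice either has minimum $\ge b+1$, or has minimum $ax^2$ with $2\le x$ and $ax^2<b$. In the last case it contains $xe_1$, so it can be written as $\z(xe_1)+\z(re_1+qe_2)$ with $0\le r<x$ and $q\ge 1$; if $q\ge 2$ it lies inside $\langle a,q^2b\rangle$, if $q=1$ and $r=0$ it equals $\langle x^2a,b\rangle$, and if $q=1$ with $\gcd(x,r)\ge 2$ it lies inside some $\langle p^2a,b\rangle$. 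The genuinely new lattices are therefore the off-diagonal ones $R_{x,r}=\left(\begin{smallmatrix} ax^2 & axr\\ axr & ar^2+b\end{smallmatrix}\right)$ with $1\le r<x$ and $\gcd(x,r)=1$, and the constraint $ax^2<b$ leaves only finitely many admissible pairs $(x,r)$.

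With this in hand I would set
$$
L=\begin{pmatrix} a&1\\ 1&b\end{pmatrix}\perp \langle 2b,2b,5b\rangle \perp \langle 2a,2a,5a\rangle \perp R \perp \MM{b+1},
$$
where $R$ is the orthogonal sum of the $R_{x,r}$ over the finitely many pairs above. The two scaled $\langle 2,2,5\rangle$-blocks represent every $q^2b$ and every $p^2a$ with $q,p\ge 2$, so pairing them orthogonally with the norm-$a$ vector $(1,0)$ and the norm-$b$ vector $(0,1)$ of the core recovers all $\langle a,q^2b\rangle$ and all $\langle p^2a,b\rangle$; the block $R$ handles the remaining small-minimum lattices directly, $\MM{b+1}$ handles everything of minimum $\ge b+1$, and rank-$1$ sublattices are subsumed as sublattices of represented lattices. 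To see $\ell\not\ra L$, I would check that each block other than the core has minimum exceeding $a$ and represents no vector of norm $b$ (using $a\nmid b$ for $\langle 2a,2a,5a\rangle$ and $\gcd(x,r)=1,\ r<x$ for each $R_{x,r}$); hence all norm-$a$ and all norm-$b$ vectors of $L$ sit in the core, where the vectors orthogonal to the unique (up to sign) norm-$a$ vector $(1,0)$ are the $(s,-as)$, of norm $s^2a(ab-1)\ne b$, so no orthogonal norm-$a$/norm-$b$ pair exists.

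The main obstacle is the case $b>4a$: there the proper sublattices of intermediate minimum $ax^2\in(a,b)$ need not be diagonal, and the new lattices $R_{x,r}$ with $\gcd(x,r)=1$ are contained in neither of the two natural diagonal families, so they must be represented on their own terms. What makes this manageable is precisely that $ax^2<b$ leaves only finitely many of them, so they fit into one finite orthogonal block; the delicate point is then verifying that this block and the two square-multiple blocks introduce no new vector of norm $a$ or $b$ — the step where $a\nmid b$ is indispensable — which is exactly what keeps the forbidden orthogonal configuration out of $L$ and secures $\ell\not\ra L$.
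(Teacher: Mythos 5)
Your construction and overall strategy essentially coincide with the paper's proof: the same core $\begin{pmatrix} a&1\\1&b\end{pmatrix}$, which represents $a$ and $b$ but only non-orthogonally, auxiliary blocks covering the proper sublattices of intermediate minimum (the paper's $K(h)$ plays the role of your $R$ together with the two scaled $\langle2,2,5\rangle$ blocks, the reductions to already-represented lattices being organized slightly differently), and $\MM{b+1}$ on top. Your verification that every proper sublattice of $\ell$ is represented by $L$ is correct.

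The gap is in the final step. From ``each block other than the core has minimum exceeding $a$ and represents no vector of norm $b$'' you conclude that all norm-$a$ and all norm-$b$ vectors of $L$ sit in the core. For norm $a$ this is fine, since every non-core block has minimum strictly greater than $a$. For norm $b$ it is a non sequitur: a vector $v\in L$ with $Q(v)=b$ decomposes as $v=v_0+v_1+\cdots$ along the orthogonal blocks with $Q(v)=\sum Q(v_i)$, and the fact that no single block represents $b$ does not exclude a norm-$b$ vector whose nonzero components lie in several blocks, each of norm less than $b$ --- for instance spread over $\langle 2a,2a,5a\rangle\perp R$. Such a vector would be automatically orthogonal to the norm-$a$ vector of the core and would yield $\ell\ra L$, destroying the argument. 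What is needed, and what the paper isolates as its displayed containment $Q(K(h))\subseteq \{ma+nb\}\setminus\{a,b\}$ before running a mod-$a$ analysis on the decomposition of a putative norm-$b$ vector, is the observation that every value less than $b$ represented by the core, by $\langle 2a,2a,5a\rangle$, and by each $R_{x,r}$ is a multiple of $a$ (for the core, $as^2+2st+bt^2<b$ forces $t=0$; for $R_{x,r}$, $a(sx+tr)^2+t^2b<b$ forces $t=0$; the blocks $\langle 2b,2b,5b\rangle$ and $\MM{b+1}$ are excluded outright by their minima). Then a norm-$b$ vector with any nonzero component outside the core would exhibit $b$ as a sum of multiples of $a$, contradicting $a\nmid b$. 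With that supplement --- which uses exactly the hypothesis you yourself flag as indispensable --- your proof closes and matches the paper's.
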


\begin{proof} It suffices to show that there is a $\z$-lattice such that it represents all proper sublattices of $\ell$, whereas it does not represent $\ell$ itself.  
Let $h$ be a  positive integer such that $h^{2}a < b < (h+1)^{2}a$.  For any $h \ge 2$, we define a $\z$-lattice 
$$
K(h) =\perp_{\substack{2\le i\le h \\ 1 \le j \le [\frac{i}{2}]}}  
\begin{pmatrix}i^{2}a&ija\\ija&j^{2}a+b\end{pmatrix}.
$$
 Now, we define
$$
L(h) = 
\begin{cases} 
\left( \z x + \z y \right) \perp  \MM{b+1}            & \text{if} \ h=1,\\
\left( \z x + \z y \right) \perp K(h) \perp  \MM{b+1} & \text{otherwise},
\end{cases} 
$$
where $\z x + \z y  = \begin{pmatrix} a&1\\1&b \end{pmatrix}$.
We claim that $L(h)$ represents all proper sublattices of $\ell$, whereas it does not represent $\ell$ itself.

First, we will prove that $L(h)$ represents all proper sublattices of $\ell$.
Let $\ell'$ be a proper sublattice of $\ell$. If $\min(\ell') >b$, then $\MM{b+1}$ represents $\ell'$ and so does $L(h)$.  If $\min(\ell') =b$, then $\ell' \simeq \langle b, \alpha^{2}a \rangle$ for some integer $\alpha$ with $\alpha^{2}a  > b$.
Since $\alpha^{2}a$ is represented by $\MM{b+1}$, $L(h)$ represents $\ell'$. Now, assume that $a<\min(\ell')<b$.
Since $4a \le \min(\ell')$, we have $h \ge 2$. Furthermore, there are integers  $i,j$, and $\beta$ with $2 \le i \le h$, $0 \le j \le \left[\frac{i}{2}\right]$ and $\beta \ge 1$ such that
$$
\ell' \simeq \begin{pmatrix} i^{2}a & ija \\ ija & j^{2}a+\beta^{2}b \end{pmatrix}.
$$
If $\beta=1$, then clearly $\ell'  \ra L(h)$.
Assume that $\beta \ge 2$.
Since $(\beta^{2}-1)b>b$, we have
$$
\ell' \simeq \begin{pmatrix} i^{2}a & ija \\ ija & j^{2}a+\beta^{2}b \end{pmatrix}
\ra \begin{pmatrix} i^{2}a & ija \\ ija & j^{2}a+b \end{pmatrix} \perp \MM{b+1},
$$
which implies that $L(h)$ represents $\ell'$. Finally, if $\min(\ell') = a$, then $\ell' \simeq \langle a, \beta^{2}b \rangle$ for some integer $\beta  \ge 2$. Since $\beta^{2}b$ is represented by $\MM{b+1}$, $L(h)$ represents $\ell'$.

Next, we will show that $L(h)$ does not represent $\ell$.  Clearly, $L(1)$ does not represent $\ell$.  Assume $h \ge 2$. For any $i,j$ with $2 \le i \le h$ and $ 1 \le j  \le \left[ \frac{i}{2} \right]$, let
$$
K_{ij}=\z z + \z w =\begin{pmatrix} i^{2}a&ija\\ija&j^{2}a+b \end{pmatrix}.
$$
 Since
$$
Q(sz+tw) = (si+tj)^{2}a + t^{2}b >a,
$$
for any integers $s$ and $t$, the binary $\z$-lattice $K_{ij}$ does not represent $a$. Suppose that $Q(sz+tw)=b$ for some integers $s$ and $t$. Since $a$ does not divide $b$,  we have $t^{2}=1$ and $si+tj=0$. Furthermore, since $j = |si| \le \left[ \frac{i}{2} \right]$,
we have $s=j=0$. This is a contradiction. Hence $K_{ij}$ does not represent $b$ for any possible integers $i$ and $j$.
Therefore we have
\begin{equation}\label{qvalue}
Q(K(h)) \subseteq (\left\{ ma+nb  \mid m,n \in \n \cup\{0\} \right\} \setminus \left\{ a, b \right\}).
\end{equation}
Suppose that $L(h)$ represents $\ell$.
Let $u \in L(h)$ be a vector with $Q(u) = b$. Then 
$$
u=\alpha x  + \beta y + z + w,
$$ 
for some integers $\alpha, \beta$ and some vectors $z \in K(h)$ and $w \in \MM{b+1}$.
Since 
$$
Q(u) = Q(\alpha x + \beta y) + Q(z) + Q(w)=b \quad \text{and}\quad Q(w) > b,
$$
we have $w=0$. By \eqref{qvalue}, we have $Q(z)=0$ or $Q(z) = \delta a$ for some integer $\delta \ge 2$.
If $|\beta| \ge 2$, then $Q(\alpha x  + \beta y) \ge \beta^{2}(b-1) >b $.
If $\beta=0$, then $Q(\alpha x)$ is a multiple of $a$, and so is $Q(u)$.
Hence, we have $|\beta|=1$. This implies that
$$
u=
\begin{cases} 
 \pm (x-y) \ \text{or} \ \pm y    & \text{if} \ a=2,\\
 \pm y& \text{if} \ a \ge 3.
\end{cases}
$$
Let $v \in L(h)$ be a vector with $Q(v) = a$. Then $v=x$ or $v=-x$. Finally, we have $B(u, v) \ne 0$, which is a contradiction. This completes the proof.  
\end{proof}

\begin{prop}\label{odd}
For any positive odd integer $m$, the diagonal binary $\z$-lattice $\ell=\langle 1, m \rangle$ is not recoverable.
\end{prop}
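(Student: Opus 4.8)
The plan is to apply Lemma \ref{not-recover}: it suffices to produce a single $\z$-lattice $L$ that represents every proper sublattice of $\ell=\langle 1,m\rangle$ yet fails to represent $\ell$ itself. I would begin by recording the shape of the proper sublattices. Writing $\ell=\z e_1+\z e_2$ with $Q(e_1)=1$, $Q(e_2)=m$, a proper sublattice either contains a norm-$1$ vector — in which case, after reducing modulo $e_1$, it is $\langle 1,k^2m\rangle$ for some $k\ge 2$ — or it has minimum at least $2$. For the latter, the key observation is that any vector of $\ell$ of norm at most $m$ is a multiple of $e_1$ (hence of square norm) unless its norm is exactly $m$. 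A short computation with a Minkowski-reduced basis then shows that a minimum-$\ge 2$ sublattice of minimum at most $m$ is isometric to one of
$$
\langle x^2,\,q^2m\rangle\quad(x\ge 2,\ q\ge 1)
\qquad\text{or}\qquad
\begin{pmatrix} x^2 & xp\\ xp & p^2+q^2m\end{pmatrix}\quad(x\ge 2,\ 1\le p\le x/2,\ q\ge 1),
$$
while every such sublattice of minimum $\ge m+1$ can be absorbed by an $\MM{m+1}$ summand later.

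Next I would take $L=\langle 1\rangle\perp L''$ with $\langle 1\rangle=\z g$ and $\min(L'')\ge 2$. The only norm-$1$ vectors of such an $L$ are then $\pm g$, whose orthogonal complement is exactly $L''$, so $\ell\ra L$ if and only if $m\ra L''$. The entire construction is thus organized around the single requirement $m\not\ra L''$. I would put $\langle 2,2,5\rangle^{m}=\langle 2m,2m,5m\rangle$ into $L''$: since $\langle 2,2,5\rangle$ represents every square except $1$, this summand represents $k^2m$ for all $k\ge 2$ but never $m$. It handles the sublattices $\langle 1,k^2m\rangle$ and, via the alignment $f_1\mapsto xg$, $f_2\mapsto pg+w$ with $w\in\langle 2m,2m,5m\rangle$ of norm $q^2m$, every sublattice in the list above with $q\ge 2$. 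A copy of $\MM{m+1}$, whose minimum $m+1$ again cannot equal $m$, absorbs all minimum-$\ge m+1$ sublattices.

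The genuinely delicate sublattices are those with $q=1$, namely $\langle x^2,m\rangle$ and $\left(\begin{smallmatrix} x^2 & xp\\ xp & p^2+m\end{smallmatrix}\right)$, since the naive embedding would force a norm-$m$ vector into $L''$, which is precisely forbidden. Here I would use that $m$ is odd to siphon off one unit of norm through the unit vector $g$: mapping the generator of norm $m$ to $g+v$ with $Q(v)=m-1$ reduces the task to having $L''$ represent $\langle x^2,m-1\rangle$ (and its off-diagonal analogue), where $m-1$ is now even. I would supply this by a further summand modeled on $\langle 2,2,5\rangle\perp\langle m-1\rangle$, engineered to represent all squares $\ge 4$ orthogonally to $m-1$.

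The main obstacle is verifying the global constraint $m\not\ra L''$ once all summands are assembled, i.e. ruling out a norm-$m$ vector built from pieces of different summands; this is exactly where the parity of $m$ must be used decisively, since an equation $m=(m-1)d^2+\nu$ with $\nu$ a value of the square-representing summand forces $d=0$ or $\nu=1$, both excluded. The one remaining danger is the case in which $m$ is itself a perfect square $r^2$, where the sublattice $\langle r^2,m\rangle=\langle m,m\rangle$ has two orthogonal norm-$m$ generators and resists the reduction above. I expect to dispose of this case by a separate argument tailored to perfect-square $m$ — for instance by siphoning a unit from each generator so that $L''$ need only represent $\left(\begin{smallmatrix} m-1 & -1\\ -1 & m-1\end{smallmatrix}\right)$, or by choosing the shift $m-c^2$ with $c$ depending on $x$ so that the square-representing summand never has to represent $m$ itself. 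Once $m\not\ra L''$ is secured, Lemma \ref{not-recover} completes the proof.
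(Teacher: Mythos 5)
Your overall strategy --- writing $L=\langle 1\rangle\perp L''$ with $\min(L'')\ge 2$, reducing everything to the single condition $m\not\ra L''$, absorbing high-minimum sublattices with $\MM{m+1}$, and using the unit vector to shift parity --- is the same as the paper's, but the concrete summands you choose break the one condition the whole construction rests on. Any summand containing an unscaled copy of $\langle 2,2,5\rangle$ (your ``$\langle 2,2,5\rangle\perp\langle m-1\rangle$'') represents far more than squares: $\langle 2,2,5\rangle$ represents $5$, $7$, $9$, $13,\dots$, so for infinitely many odd $m$ you get $m\ra L''$ and hence $\ell\ra L$, and the construction collapses. Your parity check ``$m=(m-1)d^2+\nu$ forces $d=0$ or $\nu=1$, both excluded'' is exactly where this goes wrong: $d=0$ is not excluded --- it merely requires $\nu=m$, and nothing prevents the square-representing summand from taking the value $m$. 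You have silently replaced ``represents all squares except $1$'' by ``represents only squares.'' A second gap is the off-diagonal family $\begin{pmatrix} x^2&xp\\ xp&p^2+m\end{pmatrix}$ with $p\ge 1$: if the norm-$x^2$ generator goes to $xg$ and the other to $\alpha g+v$ with $v\in L''$, the inner product is $x\alpha$, so you are forced to take $\alpha=p$ and then $Q(v)=m$, which is precisely what is forbidden; the ``engineered summand'' that is supposed to fix this is left unspecified at the only place where real work is needed.

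The paper closes both gaps with one device: a quinary $\z$-lattice $N$ representing every positive definite even binary $\z$-lattice (e.g.\ $D_5$), and $L=\langle 1\rangle\perp N\perp\MM{m+1}$. Since $N$ is even and $\MM{m+1}$ has minimum $m+1$, every vector orthogonal to $e_1$ has norm that is either even or greater than $m$, so $m\not\ra N\perp\MM{m+1}$ is automatic and no global bookkeeping over assembled summands is needed. For a Minkowski-reduced Gram matrix $\begin{pmatrix} a&b\\ b&c\end{pmatrix}$ of any sublattice of minimum $>1$, subtracting $1$ from each odd diagonal entry (and from $b$ when both $a$ and $c$ are odd) yields a positive definite even binary lattice $\z w_1+\z w_2\subseteq N$, and adding $e_1$ back to the appropriate basis vectors recovers $\ell_1$ inside $\langle 1\rangle\perp N$; this handles the off-diagonal case and the perfect-square case $\langle m,m\rangle$ uniformly. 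To salvage your version you would have to replace $\langle 2,2,5\rangle\perp\langle m-1\rangle$ by a summand that provably omits $m$ and still hits all the required binary lattices with nonzero cross terms; the evenness of $N$ is the cleanest way to guarantee both at once for odd $m$.
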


\begin{proof}
For a positive odd integer $m$, let $\ell=\langle 1, m \rangle$ be the diagonal $\z$-lattice.
Since $\langle 1 \rangle \perp \MM{2}$ represents all proper sublattices of $\langle 1,1 \rangle$, but not $\langle 1,1 \rangle$ itself, the binary $\z$-lattice $\langle 1, 1 \rangle$ is not recoverable.
Hence we may assume that $m \ge 3$.
Let $N$ be any quinary $\z$-lattice that represents all binary even $\z$-lattices.
Note that such a $\z$-lattice exists, for example, the root lattice $D_{5}$ is one of such quinary $\z$-lattices (see \cite{jko2}). Define a $\z$-lattice
$$
L=\langle 1 \rangle \perp N \perp \MM{m+1}.
$$
It is obvious that $\langle 1, m \rangle$ is not represented by $L$.

Let $\ell_1$ be any proper $\z$-sublattice of $\ell$.
First, suppose that $\min(\ell_1)=1$. Then $\ell_1 \simeq \langle 1, m\beta^{2} \rangle$ for some integer $\beta \ge 2$.
Since $\langle m\beta^{2} \rangle  \ra \MM{m+1}$, we have $\ell  \ra  L$.

Now, suppose that $\min(\ell_1)>1$. Then clearly, $\min(\ell_1) \ge 3$.
Choose a Minkowski reduced basis for $\ell_1$ so that for some integers $a,b$, and $c$ with $0 \le 2b \le a \le c$ such that
$$
\ell_1 \simeq \begin{pmatrix} a&b\\ b&c \end{pmatrix}. 
$$
Note that we are assuming that $a\ge 3$.  If $a \equiv c \equiv 0 \pmod{2}$, then $\ell_1\ra N$ and so $\ell_1 \ra L$.  If $a \equiv c \equiv 1 \pmod{2}$, then we define a $\z$-lattice
$$
\ell_1' = \begin{pmatrix} a-1&b-1\\ b-1&c-1 \end{pmatrix}.
$$
Since $d\ell_1' \ge \frac{3}{4}ac-c =\frac{3c}{4}(a-\frac43) > 0$, the even $\z$-lattice $\ell_1'$ is positive definite.
Hence $\ell_1' \ra N$ and therefore $\ell_1  \ra \langle 1 \rangle \perp N \ra  L$.  If $a \equiv 1 \pmod{2}$ and $c\equiv 0 \pmod{2}$, then we define a $\z$-lattice
$$
\ell_1'' = \begin{pmatrix} a-1&b\\ b&c \end{pmatrix}.
$$
Since $d\ell_1'' = \left( \frac{ac}{4}-b^{2} \right) +\frac{c}{4} \left( 3a-4 \right) >0$, the even $\z$-lattice $\ell_1''$ is positive definite.
Hence $\ell_1''  \ra  N$ and therefore $\ell_1  \ra  \langle 1\rangle \perp \ell_1'' \ra \langle 1\rangle \perp N \ra L$.  Since the proof of the case when $a \equiv 0 \pmod{2}$ and $c\equiv 1 \pmod{2}$ is quite similar to this, the proof is left to the readers. 
\end{proof}

\begin{prop} \label{2m4}
For any positive integer $m$ with $m \equiv 2 \pmod{4}$, the diagonal binary $\z$-lattice $\ell=\langle 1, m \rangle$ is not recoverable.
\end{prop}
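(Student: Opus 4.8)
The strategy is to invoke Lemma \ref{not-recover} and build a single $\z$-lattice $L$ that represents every proper sublattice of $\ell=\langle 1,m\rangle$ while missing $\ell$ itself. I would search for $L$ of the form
$$
L=\langle 1,1\rangle \perp P \perp \MM{m+1},
$$
where $e_1,e_2$ span the summand $\langle 1,1\rangle$ and $P$ is a finite $\z$-lattice with $\min(P)\ge 2$ to be constructed. Since $\min(P)\ge 2$ and $\min(\MM{m+1})=m+1>m$, the vectors of norm $1$ in $L$ are exactly $\pm e_1,\pm e_2$, and every vector of norm $m$ has trivial $\MM{m+1}$-component. Hence $\ell\ra L$ is equivalent to the existence of a norm-$m$ vector orthogonal to some $e_i$, which in turn is equivalent to $m-a^2\ra P$ for some integer $a$. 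The single condition preventing $L$ from representing $\ell$ is therefore the avoidance condition
$$
m-a^2\nra P \qquad\text{for all } a\in\z .
$$
Because $m\equiv 2\Mod 4$, the integer $m$ is not a square and $m-2$ is never of the shape $m-a^2$; this is precisely the slack the hypothesis gives.

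It remains to represent all proper sublattices of $\ell$ in such an $L$. A rank-one sublattice is an integer $a^2+mb^2$, represented by $ae_1$ when $b=0$, by $\MM{m+1}$ when the value exceeds $m$, and by the distinguished norm-$m$ vector below when the value is $m$. A rank-two sublattice of minimum $1$ is isometric to $\langle 1,md^2\rangle$ with $d\ge 2$, hence lies in $\langle 1\rangle\perp\MM{m+1}$. By Hermite reduction, every rank-two sublattice of minimum greater than $1$ is isometric to
$$
\begin{pmatrix} k^2 & kj \\ kj & j^2+md^2 \end{pmatrix}, \qquad k\ge 2,\ 0\le j<k,\ d\ge 1,
$$
and a direct check shows this lattice represents $m$ only when $d=1$ and $j=0$, i.e. for $\langle k^2,m\rangle$. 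When $d\ge 2$, or when $d=1$ and $k>\sqrt m$, the two invariants are large enough that the lattice embeds in $\langle 1\rangle\perp\MM{m+1}$ (send the short generator to $ke_1$ and push the norm-$md^2$, respectively norm-$(j^2+m)$, part into $\MM{m+1}$). This isolates exactly two finite families that must be absorbed using $P$: the off-diagonal forms with $d=1$, $j\ne 0$, $2\le k\le\sqrt m$, and the diagonal forms $\langle k^2,m\rangle$.

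For a diagonal form $\langle k^2,m\rangle$ I would realize the norm-$m$ vector as the entangled vector $e_1+e_2+x$ with $x\in P$ and $Q(x)=m-2$ — admissible since $m-2\notin\{m-a^2\}$ — and pair it with an orthogonal vector of norm $k^2$, taken from $\MM{m+1}$ when $k^2>m$ and assembled inside $\langle 1,1\rangle\perp P$ when $k^2\le m$; the off-diagonal forms (which do not represent $m$) are embedded into $\langle 1,1\rangle\perp P$, using the copies of $\langle 1\rangle$ to absorb any square values that coincide with some $m-a^2$. The whole problem thus collapses to producing one finite lattice $P$ with $\min(P)\ge 2$ that represents the requisite finitely many binary forms — those produced from the two families after using $\langle 1,1\rangle$ to absorb square contributions, together with the auxiliary forms $\begin{pmatrix} m-2 & * \\ * & * \end{pmatrix}$ arising above — and that satisfies $m-a^2\nra P$ for all $a$. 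I expect the main obstacle to be exactly this simultaneous requirement: each necessary piece can be chosen to miss every $m-a^2$ individually, but assembling them into an orthogonal sum $P$ forces one to control the cross sums $Q(v)+Q(w)$ and rule out the finitely many that could land on some $m-a^2$. I would resolve this by taking the pieces to have minimum at least $4$ and by exploiting the $2$-adic structure of $m\equiv 2\Mod 4$, for instance by tracking the represented values modulo $4$ (or $8$) so that the dangerous residue of $m$ is avoided.
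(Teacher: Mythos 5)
Your overall strategy is the same as the paper's: invoke Lemma \ref{not-recover}, build one lattice of the shape (unary/binary part) $\perp$ (middle block) $\perp \MM{m+1}$ that misses $\langle 1,m\rangle$, and exploit the residue $m\equiv 2\pmod 4$ to create the obstruction. But the proof is not actually there: the lattice $P$, on which everything hinges, is never constructed, and your closing paragraph concedes that the decisive difficulty --- making $P$ represent all the required binary forms while avoiding every value $m-c^2$ --- is unresolved (``I expect the main obstacle to be exactly this simultaneous requirement\dots I would resolve this by\dots''). This is not a detail one can wave at. The forms you must absorb are (up to the $\langle 1,1\rangle$-corrections) the reduced sublattices $\begin{pmatrix} a^2 & ab\\ ab & b^2+m\end{pmatrix}$ with $2\le a\lesssim\sqrt m$, a family whose size grows with $m$, so ``producing one finite lattice $P$'' requires a construction uniform in $m$. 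The paper's solution is precisely such an object: an even $2$-universal quinary lattice scaled by $2$, giving a sublattice $\mathcal N$ whose norms lie in $4\z$ --- hence automatically disjoint from $m-c^2\equiv 1,2\pmod 4$, which is the avoidance condition for free --- and which represents \emph{every} positive definite binary form with norm in $4\z$. The residual adjustments modulo $4$ are then supplied by a fixed diagonal part $\langle 1,3,5,m-1\rangle$, with explicit vectors written down in each parity case of $(a,b)$, and with the small cases $m=2,6$ and $a=2,3$ treated separately. Nothing in your proposal identifies this mechanism; ``tracking the represented values modulo $4$ (or $8$)'' names the right invariant but does not produce the lattice.

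There are also concrete cracks in the casework you do sketch. For the diagonal sublattices $\langle k^2,m\rangle$ with $k^2\le m$ you propose the norm-$m$ vector $e_1+e_2+x$ with $Q(x)=m-2$ and an orthogonal norm-$k^2$ vector ``assembled inside $\langle 1,1\rangle\perp P$''; the natural such vector $e_1-e_2+y$ with $y\perp x$ forces $k^2-2\in Q(P)$, and $k^2-2$ can equal $m-c^2$ (already for $m=6$, $k=2$ one gets $k^2-2=2=m-2^2$), which contradicts your avoidance condition. Likewise the assertion that the off-diagonal forms with $d=1$, $j\ne 0$, $k\le\sqrt m$ can be ``embedded into $\langle 1,1\rangle\perp P$, using the copies of $\langle 1\rangle$ to absorb any square values'' is exactly the step that needs the universality of something like $\mathcal N$ together with a correct handling of the cross terms $B(\cdot,\cdot)$; as stated it is an unverified claim, not an argument. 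In short: right skeleton, but the body of the proof --- the construction that reconciles universality with avoidance --- is missing.
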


\begin{proof}
For a positive integer $m \equiv 2 \pmod{4}$, let $\ell=\langle 1,m \rangle$ be the diagonal $\z$-lattice. Since $\langle 1,1 \rangle \perp \MM{3}$ represents all proper sublattices of $\langle 1,2 \rangle$, but it does not represent $\langle 1,2 \rangle$ itself, the binary $\z$-lattice $\langle 1,2 \rangle$ is not recoverable.  If we define
$$
L=\langle 1 \rangle \perp \begin{pmatrix} 4 & 0 & 2 \\ 0 & 5 & 1 \\ 2 & 1 & 7 \end{pmatrix} \perp \MM{7},
$$
then one may check that $L$ represents all proper sublattices of $\langle 1, 6 \rangle$, but it does not represent $\langle 1, 6 \rangle$ itself.
From now on, we assume that $m \ge 10$.
Define
$$
L_m' = \z x + \z y + \z z + \z t = \langle 1,3,5,m-1 \rangle.
$$
Let $N$ be an even 2-universal quinary $\z$-lattice and let $\mathcal{N}$ be the $\z$-lattice obtained from $N$ by scaling the quadratic space $\mathbb{Q} \otimes N$ by $2$. Hence $\mathcal N$ represents all binary $\z$-lattices whose norm is contained in $4\z$. 
Now, we define 
$$
L_m = L_m' \perp \mathcal{N} \perp \MM{m+1}.
$$
We will show that any proper sublattice of $\ell=\langle 1, m \rangle$ is represented by $L_m$,
whereas $\ell$ itself is not represented by $L_m$. Suppose, on the contrary, that $\langle 1, m \rangle$ is represented by $L_m$.
Then, one may easily check that 
$$
\langle m \rangle \longrightarrow \langle 3,5 \rangle \perp \mathcal{N}.
$$
Hence we have $m \equiv 3a^2+5b^2 \pmod{4}$ for some integers $a$ and $b$, which is a contradiction to the fact that $m \equiv 2 \pmod{4}$.

Let $\ell= \z u +\z v = \langle 1, m \rangle$ and let $\ell_1$ be any proper sublattice of $\ell$.  If $\min(\ell_1)=1$, then $\ell_1\simeq \langle1,m\beta^2 \rangle$ for some integer $\beta\ge2$. Clearly, $\ell_1 \ra L_m$.  Assume that $1< \min(\ell_1)< m$. Then there are integers $a,b$, and $c$ such that $\ell_1= \z (au) + \z (bu+ cv)$. If $|c| \ge 2$, then we have $\ell_1 \subseteq \z u  + \z (cv) = \langle 1, c^{2}m \rangle \ra L_m$.
Hence we may assume that $\ell_1 = \z (au)+\z (bu+v)$, where the integers $a$ and $b$ satisfy $a \ge 2$ and $0 \le b < a$. Note that
$$
\ell_1 = \begin{pmatrix} a^{2} & ab \\ ab & b^{2}+m \end{pmatrix}.
$$
First, assume that $a \equiv b \equiv 0 \pmod{2}$.
Since 
$$
\begin{pmatrix} a^{2} & ab \\ ab & b^{2}+m-6 \end{pmatrix} \ra \mathcal{N},
$$
we have $\ell_1\ra \langle 1,5 \rangle \perp \mathcal{N} \ra L_m$.  Now, assume that $a \equiv 0 \pmod{2}$ and $b\equiv 1 \pmod{2}$.
Since 
$$
\begin{pmatrix} a^{2} & ab \\ ab & b^{2}+m-3 \end{pmatrix} \ra \mathcal{N},
$$
we have $\ell_1 \ra \langle 3 \rangle \perp \mathcal{N} \ra L_m$. Assume that $a \equiv b \equiv 1 \Mod{2}$. Let $w\in\mathcal{N}$ be a vector with $Q(w)=m-2$. Then, we have
$$
\z (x+y+z)+\z(y+w)=\begin{pmatrix} 9&3\\ 3&1+m \end{pmatrix} \ra L_m.
$$
Hence we may assume that $a \ge 5$. Consider the following $\z$-lattice 
$$
\ell_1' = \begin{pmatrix} a^{2}-9 & ab-3 \\ ab-3 & b^{2}+m-3 \end{pmatrix}.
$$
Since $m > a^2$, we have $d (\ell_1')>0$. Hence $\ell_1' \ra \mathcal{N}$.
Therefore there are vectors $w_{1}, w_{2} \in \mathcal{N}$ such that
$$
\ell_1' \simeq \z w_{1}+\z w_{2} \subseteq \mathcal{N}.
$$
Then, we have
$$
\z (x+y+z+w_{1}) + \z (y+w_{2})  = \begin{pmatrix} a^{2} & ab \\ ab & b^{2}+m \end{pmatrix}.
$$
This implies that $\ell_1$ is represented by $L_m$. 

Finally, assume that $a \equiv 1 \pmod{2}$ and $b\equiv 0 \pmod{2}$.
If $a=3$, then $b=0$ or $2$. If $b=0$, then 
$$
\ell_1 = \langle 9,m \rangle \longrightarrow \langle 1,5,m-1 \rangle \perp \mathcal{N} \ra L_m.
$$
If $b=2$, then we have 
$$
\ell_1 = \begin{pmatrix} 9 & 6 \\ 6 & m+4 \end{pmatrix} \simeq \begin{pmatrix} 9 & 3 \\ 3 & 1+m \end{pmatrix} \ra L_m.
$$
Now, assume that $a \ge 5$. Consider the following $\z$-lattice 
$$
\ell_1'' = \begin{pmatrix} a^{2}-9 & ab-4 \\ ab-4 & b^{2}+m-6 \end{pmatrix}.
$$
Since $d (\ell_1'')>0$, we have $\ell_1'' \ra \mathcal{N}$.
Hence there are vectors $w'_{1}, w'_{2} \in \mathcal{N}$ such that
$$
\ell_1'' \simeq \z w'_{1}+\z w'_{2} \subseteq \mathcal{N}.
$$
Then, we have
$$
\z (x+y+z+w'_{1}) + \z (-x+z+w'_{2})  = \begin{pmatrix} a^{2} & ab \\ ab & b^{2}+m \end{pmatrix}.
$$
Therefore, we have $\ell_1 \ra L_m$. 

 If $\min(\ell_1)=m$, then $\ell_1 \simeq \langle \alpha^2,m \rangle$ for some integer $\alpha$ with $\alpha^2>m$. Hence, we have $\ell_1 \ra \langle 1,m-1\rangle\perp \MM{m+1} \ra L_m$.  Finally, if $\min(\ell_1)>m$, then we have $\ell_1 \ra  \MM{m+1} \ra L_m$. This completes the proof. 
\end{proof}

\section{Recoverable numbers}

From Propositions \ref{bin}, \ref{2leab}, \ref{odd}, and \ref{2m4},  one may conclude that if a binary $\z$-lattice $\ell$ is recoverable, then $\ell = \langle a, 4ma \rangle$ for some positive integers $a$ and  $m$.
In this section, we focus on the case when $a=1$.  A positive integer $m$ is called \textit{recoverable} if the diagonal binary $\z$-lattice $\langle 1,4m \rangle$ is recoverable. We prove that any square of an integer is a recoverable number. We also prove that there are infinitely many non square recoverable numbers.

\begin{prop}
Any square of an integer is recoverable, that is,  the diagonal binary $\z$-lattice $\ell = \langle 1, 4m^{2} \rangle$ is recoverable for any integer $m$.  
\end{prop}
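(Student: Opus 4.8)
The plan is to generalize the two-element recovering set used for $\langle 1,4\rangle$ in the remark above. Writing $\ell=\langle 1,4m^2\rangle=\z u\perp\z v$ with $Q(u)=1$ and $Q(v)=4m^2$, I would take the set of proper sublattices
$$
S_0=\{\,\langle 1,16m^2\rangle,\ \langle 4,4m^2\rangle\,\},
$$
realized as $\langle 1,16m^2\rangle=\z u+\z(2v)$ and $\langle 4,4m^2\rangle=\z(2u)+\z v$, each of index $2$ in $\ell$. It then suffices to show that every $\z$-lattice $L$ that represents both members of $S_0$ also represents $\ell$ itself; this is exactly what is needed for $\ell$ to be recoverable by $S_0$.

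So let $L$ represent both lattices in $S_0$. Since $\langle 1,16m^2\rangle\ra L$, there is a vector $e_1\in L$ with $Q(e_1)=1$, and because a vector of norm $1$ splits off as an orthogonal summand we may write $L=\z e_1\perp L_1$. The problem is thereby reduced to producing a vector of norm $4m^2$ in $L_1$: once we have $w\in L_1$ with $Q(w)=4m^2$, the sublattice $\z e_1+\z w$ is isometric to $\ell$, so $\ell\ra L$.

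To produce such a $w$ I would feed in the representation $\langle 4,4m^2\rangle\ra L$, that is, a pair of orthogonal vectors $g_1,g_2\in L$ with $Q(g_1)=4$ and $Q(g_2)=4m^2$. Writing $g_1=ae_1+x$ and $g_2=be_1+y$ with $a,b\in\z$ and $x,y\in L_1$, one gets $a^2+Q(x)=4$, $b^2+Q(y)=4m^2$, and $ab+B(x,y)=0$, forcing $a\in\{0,\pm1,\pm2\}$. I would then split into three cases. If $a=\pm2$ then $Q(x)=0$, so $x=0$ and $0=B(g_1,g_2)=\pm2b$ gives $b=0$; hence $y=g_2\in L_1$ already has norm $4m^2$. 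If $a=\pm1$ then $Q(x)=3$ and $B(x,y)=-ab$, and the direct computation $Q(abx+y)=3b^2-2b^2+(4m^2-b^2)=4m^2$ (using $a^2=1$) exhibits $w=abx+y\in L_1$ of the desired norm. If $a=0$ then $g_1=x\in L_1$ has norm $4$, and here one uses that $4m^2=(2m)^2$ is a perfect square: the vector $mx\in L_1$ satisfies $Q(mx)=4m^2$. In every case $L_1$ represents $4m^2$, completing the argument.

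I do not expect a genuine obstruction once $S_0$ is chosen correctly; the substance lies in the elementary case analysis. The one conceptual point is the case $a=0$, where a norm-$4$ vector of $L_1$ is scaled by $m$ to land exactly on $4m^2$ — an option available precisely because $4m^2$ is a square. This is presumably why the square case is singled out, and it is exactly this step that should fail for general (non-square) values, where a more delicate construction is required.
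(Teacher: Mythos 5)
Your proof is correct and follows essentially the same route as the paper: split off a unimodular vector $e_1$ using $\langle 1,16m^2\rangle\ra L$, then analyze the $e_1$-components of a representation of $\langle 4,4m^2\rangle$ in the three cases $a=\pm2,\pm1,0$, which correspond exactly to the paper's cases (1)--(3), including the use of $Q(mx)=4m^2$ when the norm-$4$ vector lies in $L'$. The only cosmetic difference is that you name the two-element criterion set explicitly at the outset, whereas the paper phrases the argument in terms of an arbitrary lattice representing all proper sublattices.
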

\begin{proof}
Let $\ell= \langle 1, 4m^{2} \rangle$ be the diagonal binary $\z$-lattice. 
Let $S$ be the set of all proper sublattices of $\ell$. By Lemma \ref{not-recover}, it suffices to show that any $S$-universal $\z$-lattice represents $\ell$ itself.  Let $L$ be an $S$-universal $\z$-lattice. Since $\langle 1, 16m^{2} \rangle  \ra  L$, we have $L=\z e_{1} \perp L' =\langle 1 \rangle \perp L'$ for some $\z$-lattice $L'$.
Since $\langle 4, 4m^{2} \rangle \ra L$, one of the following holds:
\begin{enumerate}
\item  there is a vector $y \in L'$ such that $\z (2e_{1}) + \z y = \langle 4, 4m^{2} \rangle$;
\item  there are vectors $x,y \in L'$ and an integer $a$ such that 
$$
\z (e_{1}+x) + \z (ae_{1}+y) = \langle 4, 4m^{2} \rangle;
$$
\item  there are vectors $x,y \in L'$ and an integer $a$ such that 
$$
\z x + \z (ae_{1}+y) = \langle 4, 4m^{2} \rangle.
$$
\end{enumerate}
If (1) holds, then $Q(y)=4m^{2}$. Hence $L$ represents $\ell$.   If (2) holds, then 
$$
\z x+\z y = \begin{pmatrix} 3&-a\\ -a & 4m^{2}-a^{2}\end{pmatrix}.
$$
Hence we have $Q(ax+y)=4m^{2}$. Therefore $L$ represents $\ell$.   Finally, if (3) holds, then we have $Q(mx)=4m^{2}$. Therefore $L$ represents $\ell$.  This completes the proof.
\end{proof}

Let $\mathscr{L}$ be the set of all isometry classes of binary $\z$-lattices and let $\mathscr{L}_{13}$ be the set of all isometry classes of binary $\z$-lattices whose  second successive minimum is greater than or equal to $13$.
We define a map $\phi_{9} : \mathscr{L}_{13}  \ra  \mathscr{L}$ by
$$
\phi_{9} \left( \begin{pmatrix} a & b \\ b & c \end{pmatrix}  \right) = \begin{pmatrix} a & b \\ b & c-9 \end{pmatrix},
$$
where $\begin{pmatrix} a & b \\ b & c \end{pmatrix}$ is a Minkowski-reduced form in the class so that $0 \le 2b \le a \le c$. 
Since 
$$
d(\phi_{9}(K))=ac-b^2-9a=\left(\frac {ac}4-b^2\right)+\frac{3a}4(c-12)>0,
$$
the above map $\phi_9$ is well-defined.

\begin{lem}\label{phi9}
Let $L$ be a $\z$-lattice and let $K$ be a binary $\z$-lattice in $\mathscr{L}_{13}$.
If $\phi_{9}^{k}(K)$ is represented by $L$ for some nonnegative integer $k$,
then 
$$
K \longrightarrow L \perp 9I_{5}.
$$
Here, $9I_{5}$ is the quinary $\z$-lattice obtained from the cubic lattice  $I_{5}$ by scaling the quadratic space $\mathbb{Q} \otimes I_{5}$ by $9$. 
\end{lem}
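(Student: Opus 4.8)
The plan is to reduce the statement to the single assertion $K \ra \phi_9^k(K) \perp 9I_5$. Once this is established, taking the orthogonal sum of the hypothesis $\phi_9^k(K) \ra L$ with $9I_5$ gives $\phi_9^k(K) \perp 9I_5 \ra L \perp 9I_5$, and composing the two representations yields $K \ra L \perp 9I_5$, which is the conclusion.

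First I would prove the one-step embedding $K \ra \phi_9(K) \perp \langle 9\rangle$. Writing the Minkowski-reduced Gram matrix of $K$ as $\begin{pmatrix} a & b \\ b & c\end{pmatrix}$ with $c \ge 13$, the class $\phi_9(K)$ is by definition represented by $\begin{pmatrix} a & b \\ b & c-9\end{pmatrix}$, so it carries a basis $\{p,q\}$ with exactly this Gram matrix; if $z$ generates $\langle 9\rangle$, then $\{p,\,q+z\}$ has Gram matrix $\begin{pmatrix} a & b \\ b & c\end{pmatrix}$, which is $K$. Applying this to $\phi_9^i(K)$ for $i=0,1,\dots,k-1$ (each of which lies in $\mathscr{L}_{13}$, since $\phi_9^k(K)$ is assumed to be defined) and composing, I obtain an isometric embedding $\iota\colon K \ra \phi_9^k(K) \perp 9I_k$.

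Next I would compress the $9I_k$ summand down to $9I_5$. Let $\{e_1,e_2\}$ be a basis of $K$ and decompose $\iota(e_j)=g_j+w_j$ along the orthogonal splitting $\phi_9^k(K)\perp 9I_k$, so that $g_j\in\phi_9^k(K)$ and $w_j\in 9I_k$. Since every inner product on $9I_k$ lies in $9\z$, the Gram matrix of $\{w_1,w_2\}$ equals $9D'$ for a positive semidefinite integral binary $\z$-lattice $D'$. The key input is that every positive semidefinite binary $\z$-lattice is represented by $I_5$; hence $D'\ra I_5$ and therefore $9D'\ra 9I_5$. Choosing $w_1',w_2'\in 9I_5$ with the same Gram matrix as $w_1,w_2$ and setting $\iota'(e_j)=g_j+w_j'$ produces two vectors whose Gram matrix is the sum of the Gram matrix of $\{g_1,g_2\}$ and $9D'$, that is, the Gram matrix of $\{\iota(e_1),\iota(e_2)\}$, which is $K$. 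Thus $\iota'\colon K \ra \phi_9^k(K)\perp 9I_5$, as needed. It is worth emphasizing that this route never requires me to analyze how the successive Minkowski reductions interleave as $\phi_9$ is iterated: the only datum retained from the iteration is the single binary lattice $D'$, whose integrality and positive semidefiniteness are automatic.

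The substantive point, and the one I expect to be the main obstacle, is the representation $D'\ra I_5$; this is exactly why the target lattice carries five copies of $\langle 9\rangle$. Here the rank $5$ is sharp and cannot be lowered to $4$: for instance $\langle 7,7\rangle$ is represented by $I_5$ but not by $I_4$, because up to signs and permutation of coordinates the only vector of norm $7$ in $I_4$ is $(2,1,1,1)$, and a short check shows no vector of norm $7$ is orthogonal to it. To justify that $I_5$ represents all binary $\z$-lattices I would use the local-global principle, relying on the fact that $I_5$ has class number one together with the fact that a unimodular $\z_2$-lattice of rank $5$ represents every integral binary $\z_2$-lattice; alternatively one may cite the known classification of binary lattices represented by sums of five squares, the degenerate cases (where $D'$ has rank $\le 1$) being immediate.
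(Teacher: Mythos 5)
Your proof is correct and is essentially the paper's argument: both rest on the one\nobreakdash-step identity $K=\phi_9(K)+\mathrm{diag}(0,9)$ together with the fact that $I_5$ represents every positive semidefinite integral binary form (so $9I_5$ absorbs anything of scale in $9\z$); the paper simply performs the absorption into $9I_5$ at each stage of an induction on $k$, whereas you accumulate $9I_k$ first and compress once at the end. One inessential slip in an aside: your claimed witness that rank $5$ is sharp is wrong, since $\langle 7,7\rangle \ra I_4$ via the orthogonal norm-$7$ vectors $(2,1,1,1)$ and $(1,-2,1,-1)$; this does not affect the proof, which only needs that $I_5$ suffices.
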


\begin{proof}
Let $L$ be a $\z$-lattice and let $K$ be a binary $\z$-lattice in $\mathscr{L}_{13}$. Let $\begin{pmatrix} a & b \\ b & c \end{pmatrix}$ be the Minkowski-reduced form in the isometry class of $K$.
Note that $9I_{5}$ represents all binary $\z$-lattices whose scale is contained in $9\z$. We use an induction on $k$. Suppose that $\phi_{9}(K)$ is represented by $L$.
Then, it is obvious that 
$$
K= \begin{pmatrix} a & b \\ b & c \end{pmatrix} = \begin{pmatrix} a & b \\ b & c-9 \end{pmatrix} + \begin{pmatrix} 0 & 0 \\ 0 & 9 \end{pmatrix} \ra  L \perp 9I_{5}.
$$
Suppose that the assertion is true for $k$. Assume that $\phi_{9}^{k+1}(K) \ra L$.
Let $K'=\phi_{9}(K)$.
Then $\phi_{9}^{k}(K') = \phi_{9}^{k+1}(K) \ra L$.
It follows from the induction hypothesis that $K' \ra  L \perp 9I_{5}$.
This implies that 
$$
K' =\begin{pmatrix} a & b \\ b & c-9 \end{pmatrix} = \begin{pmatrix}	\alpha_{1} & \beta_{1} \\ \beta_{1} & \gamma_{1} \end{pmatrix} + \begin{pmatrix} \alpha_{2} & \beta_{2} \\ \beta_{2} & \gamma_{2} \end{pmatrix},
$$
where $\begin{pmatrix}	\alpha_{1} & \beta_{1} \\ \beta_{1} & \gamma_{1} \end{pmatrix}  \ra  L$ and $\begin{pmatrix} \alpha_{2} & \beta_{2} \\ \beta_{2} & \gamma_{2} \end{pmatrix} \ra 9I_{5}$.
Since 
$
\begin{pmatrix} \alpha_{2} & \beta_{2} \\ \beta_{2} & \gamma_{2}+9 \end{pmatrix}  \ra  9I_{5},
$
we have
$$
K  = \begin{pmatrix} a & b \\ b & c \end{pmatrix}
=\begin{pmatrix}	\alpha_{1} & \beta_{1} \\ \beta_{1} & \gamma_{1} \end{pmatrix} + \begin{pmatrix} \alpha_{2} & \beta_{2} \\ \beta_{2} & \gamma_{2}+9 \end{pmatrix}
\ra L \perp 9I_{5}.
$$
This completes the proof.
\end{proof}

\begin{lem}\label{ord3m}
Any proper sublattice of $\langle 1,1 \rangle$ is represented by both
$$
L_{1}= \langle 1,2,3 \rangle \perp \begin{pmatrix} 2 & 1 \\ 1 & 5 \end{pmatrix} \perp 9I_{5} \  \mbox{ and }\  L_{2}=\langle 1,2,6 \rangle \perp \begin{pmatrix} 2 & 1 \\ 1 & 5 \end{pmatrix} \perp 9I_{5}.
$$
\end{lem}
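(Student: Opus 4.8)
The plan is to reduce the claim to a finite computation by means of Lemma~\ref{phi9}. Write $M_1=\langle 1,2,3\rangle\perp\begin{pmatrix}2&1\\1&5\end{pmatrix}$ and $M_2=\langle 1,2,6\rangle\perp\begin{pmatrix}2&1\\1&5\end{pmatrix}$, so that $L_i=M_i\perp 9I_5$ for $i=1,2$. Let $K$ be a proper sublattice of $\langle 1,1\rangle$ with Minkowski-reduced Gram matrix $\begin{pmatrix}a&b\\b&c\end{pmatrix}$, where $0\le 2b\le a\le c$. If $c=\mu_2(K)\ge 13$, then $K\in\mathscr{L}_{13}$ and I may apply $\phi_9$; since each application lowers the $(2,2)$-entry by $9$, after finitely many steps the second successive minimum drops below $13$. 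By Lemma~\ref{phi9}, in order to prove $K\ra L_i=M_i\perp 9I_5$ it therefore suffices to prove that every such reduced binary lattice is represented by the quinary lattice $M_i$. This replaces the infinite family of sublattices by the finite list of positive definite binary lattices $\begin{pmatrix}a&b\\b&c\end{pmatrix}$ with $0\le 2b\le a\le c\le 12$ that occur among these reductions; the sublattices $K$ already satisfying $\mu_2(K)\le 12$ appear directly on the same list.

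Next I would describe this finite list. Because $K$ embeds in $\langle 1,1\rangle$, every value $Q(v)$ with $v\in K$ is a sum of two squares; in particular $a=\min(K)$ is a sum of two squares, so $a\in\{1,2,4,5,8,9,10\}$ once $a\le 12$. The sublattices with $a=1$ are exactly the diagonal lattices $\langle 1,n^2\rangle$ with $n\ge 2$, whose $\phi_9$-reductions are $\langle 1,t\rangle$ with $t\le 12$; the remaining sublattices have $\min\ge 2$, the smallest nontrivial case being the block $\begin{pmatrix}2&1\\1&5\end{pmatrix}$ itself, which is represented by $M_i$ trivially as an orthogonal summand. I note that the reduced lattices $\phi_9^k(K)$ need not themselves be sublattices of $\langle 1,1\rangle$; this is harmless, since Lemma~\ref{phi9} only requires that some iterate be represented by $M_i$.

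Then I would verify the finite list by exhibiting explicit orthogonal pairs of vectors. For the lattices with $a=1$ I would send the norm-$1$ vector to the generator of the summand $\langle 1\rangle\subseteq M_i$ and represent the complementary entry in the orthogonal rank-$4$ part $\langle 2,3\rangle\perp\begin{pmatrix}2&1\\1&5\end{pmatrix}$ (resp. $\langle 2,6\rangle\perp\begin{pmatrix}2&1\\1&5\end{pmatrix}$); for the lattices with $\min\ge 2$ I would directly produce two vectors of the prescribed norms and inner product inside $M_i$, using the basis vectors of $\langle 1,2,3\rangle$ (resp. $\langle 1,2,6\rangle$) together with the block. The whole computation is carried out in parallel for $M_1$ and $M_2$, which differ only in one diagonal entry ($3$ versus $6$).

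The main obstacle is precisely this last bookkeeping step: for every lattice on the finite list one must guarantee a simultaneously correct choice of two vectors in $M_i$ realizing both diagonal norms and the vanishing of the off-diagonal inner product, uniformly for $M_1$ and $M_2$. Unlike representing the two diagonal entries separately, matching the cross term $b$ while keeping the two norms fixed is the genuinely constrained part, and it is where the specific shapes $\langle 1,2,3\rangle$, $\langle 1,2,6\rangle$ and $\begin{pmatrix}2&1\\1&5\end{pmatrix}$ are used.
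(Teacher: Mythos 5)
Your overall strategy (iterate $\phi_9$ until $\mu_2\le 12$, then do a finite check against the quinary part $M_i$) is the same skeleton as the paper's proof, but there is a genuine gap at exactly the point you flag as ``bookkeeping.'' The finite verification you propose cannot succeed as stated: the quinary lattice $\langle 1,2,3\rangle\perp\begin{pmatrix}2&1\\1&5\end{pmatrix}$ does \emph{not} represent all binary lattices with $\mu_2\le 12$; the paper lists $15$ exceptions, among them $\begin{pmatrix}1&0\\0&1\end{pmatrix}$, $\begin{pmatrix}2&1\\1&3\end{pmatrix}$ and $\begin{pmatrix}2&1\\1&4\end{pmatrix}$. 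You try to shrink the list to be checked by arguing that the minimum of $K$ is a sum of two squares, but that constraint applies only to $K$ itself, not to the iterates $\phi_9^k(K)$ — which, as you yourself concede, need not embed in $\langle 1,1\rangle$. So the list you actually have to verify is (essentially) all reduced binary forms with $\mu_2\le 12$ reachable as iterates, and some of the $15$ bad forms are reachable: for instance $\phi_9$ sends $\begin{pmatrix}2&1\\1&13\end{pmatrix}$ to $\begin{pmatrix}2&1\\1&4\end{pmatrix}$, which is not represented by $M_1$. Your plan of ``exhibiting explicit orthogonal pairs of vectors'' has no way to handle these cases, and they are the entire difficulty of the lemma.

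The paper closes this gap with two additional ideas you would need. First, since $K\subseteq\langle 1,1\rangle$, the discriminant $dK$ is a perfect square, and $d(\phi_9(\ell))=d\ell-9\mu_1(\ell)\equiv d\ell\pmod 9$; hence $\ord_3\bigl(d(\phi_9^k(K))\bigr)$ can never equal $1$, which eliminates $12$ of the $15$ exceptional forms from ever occurring as a terminal iterate. Second, for the three surviving exceptions one analyzes the $\phi_9$-preimages inside $\mathscr{L}_{13}$: the preimages of $\begin{pmatrix}1&0\\0&1\end{pmatrix}$ and $\begin{pmatrix}2&1\\1&3\end{pmatrix}$ all have $\mu_2\le 12$, contradicting $\phi_9^{k-1}(\ell)\in\mathscr{L}_{13}$, and the only admissible preimage of $\begin{pmatrix}2&1\\1&4\end{pmatrix}$ is $\begin{pmatrix}2&1\\1&13\end{pmatrix}$, which \emph{is} represented by the quinary lattice, so one applies Lemma~\ref{phi9} at step $k-1$ instead of $k$. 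Without some substitute for these two steps, your finite computation terminates in cases it cannot resolve.
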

\begin{proof}
Since the proof is quite similar to each other, we only provide the proof of the first case.
Let $\ell$ be any proper sublattice of $\langle 1,1 \rangle$. 

 One may directly check that the quinary $\z$-lattice $\langle 1,2,3 \rangle \perp \begin{pmatrix} 2 & 1 \\ 1 & 5 \end{pmatrix}$ represents all binary $\z$-lattices whose second successive minimum is less than or equal to $12$ except for the following $15$ binary $\z$-lattices:
 
\begin{equation}\label{15lattices}
\begin{array}{ll}
&
\begin{pmatrix} 1 & 0 \\ 0 & 1 \end{pmatrix}, \begin{pmatrix} 1 & 0 \\ 0 & 6 \end{pmatrix},
\begin{pmatrix} 2 & 1 \\ 1 & 2 \end{pmatrix}, \begin{pmatrix} 2 & 1 \\ 1 & 3 \end{pmatrix},
\begin{pmatrix} 2 & 1 \\ 1 & 4 \end{pmatrix},\\[0.3cm]

&
\begin{pmatrix} 4 & 0 \\ 0 & 6 \end{pmatrix}, \begin{pmatrix} 4 & 1 \\ 1 & 4 \end{pmatrix}, \begin{pmatrix} 4 & 1 \\ 1 & 13 \end{pmatrix},\begin{pmatrix} 4 & 2 \\ 2 & 7 \end{pmatrix}, \begin{pmatrix} 6 & 0 \\ 0 & 7 \end{pmatrix}, \\[0.3cm]

&
\begin{pmatrix} 6 & 0 \\ 0 & 10 \end{pmatrix}, \begin{pmatrix} 6 & 3 \\ 3 & 7 \end{pmatrix}, \begin{pmatrix} 6 & 3 \\ 3 & 10 \end{pmatrix}, \begin{pmatrix} 7 & 1 \\ 1 & 10 \end{pmatrix}, \begin{pmatrix} 10 & 2 \\ 2 & 10 \end{pmatrix}. 
\end{array}
\end{equation}
Note that the above $15$ binary $\z$-lattices are not proper sublattices of $\langle 1,1 \rangle$. 
Hence we may  assume that $\mu_2(\ell)\ge 13$. Since $\mu_{2}(\phi_{9}(\ell)) \le \max\{\mu_{1}(\ell), \mu_{2}(\ell)-9 \}$, there exists a positive integer $k$ such that $\phi_{9}^{k-1}(\ell) \in \mathscr{L}_{13}$ and $\mu_{2}(\phi_{9}^{k}(\ell)) \le 12$. When $k=1$, then we let $\phi_9^0(\ell)=\ell$. If 
$$
\phi_{9}^{k}(\ell)  \ra  \langle 1,2,3 \rangle \perp \begin{pmatrix} 2 & 1 \\ 1 & 5 \end{pmatrix},
$$ 
then, by Lemma \ref{phi9}, we have
$$
\ell  \ra \langle 1,2,3 \rangle \perp \begin{pmatrix} 2 & 1 \\ 1 & 5 \end{pmatrix} \perp 9I_{5}.
$$
Hence, we may assume that $\phi_{9}^{k}(\ell)$ is isometric to one of 15 binary  $\z$-lattices listed in \eqref{15lattices}.
Since $d\ell$ is a square of an integer and $d(\phi_{9}(\ell)) = d\ell -9\mu_{1}(\ell)$, we see that $\ord_{3}(d(\phi_{9}^{k}(\ell)))$ cannot be one. Hence  $\phi_{9}^{k}(\ell)$ is isometric to one of the following $\z$-lattices:
$$
\begin{pmatrix} 1 & 0 \\ 0 & 1 \end{pmatrix}, \begin{pmatrix} 2 & 1 \\ 1 & 3 \end{pmatrix}, \quad \text{and} \quad \begin{pmatrix} 2 & 1 \\ 1 & 4 \end{pmatrix}.
$$
Since $\phi_{9}^{k-1}(\ell) \in \mathscr{L}_{13}$ and
$$
\begin{array}{ll}
&
\phi_{9}^{-1}\left(\begin{pmatrix} 1 & 0 \\ 0 & 1 \end{pmatrix}\right)= \left\{ \begin{pmatrix} 1 & 0 \\ 0 & 10 \end{pmatrix}, \begin{pmatrix} 2 & 1 \\ 1 & 10 \end{pmatrix}, \begin{pmatrix} 5 & 2 \\ 2 & 10 \end{pmatrix}, \begin{pmatrix} 10 & 3 \\ 3 & 10 \end{pmatrix}\right\},\\[0.3cm]

&
\phi_{9}^{-1}\left(\begin{pmatrix} 2 & 1 \\ 1 & 3 \end{pmatrix}\right)= \left\{ \begin{pmatrix} 2 & 1 \\ 1 & 12 \end{pmatrix}, \begin{pmatrix} 3 & 1 \\ 1 & 11 \end{pmatrix}, \begin{pmatrix} 7 & 3 \\ 3 & 11 \end{pmatrix}, \begin{pmatrix} 10 & 5 \\ 5 & 12 \end{pmatrix} \right\},\\[0.3cm]

&
\phi_{9}^{-1}\left(\begin{pmatrix} 2 & 1 \\ 1 & 4 \end{pmatrix}\right)= \left\{ \begin{pmatrix} 2 & 1 \\ 1 & 13 \end{pmatrix}, \begin{pmatrix} 4 & 1 \\ 1 & 11 \end{pmatrix}, \begin{pmatrix} 8 & 3 \\ 3 & 11 \end{pmatrix}\right\},
\end{array}
$$
we have 
$$
\phi_{9}^{k}(\ell)\simeq \begin{pmatrix} 2 & 1 \\ 1 & 4 \end{pmatrix}\quad \text{and}\quad
\phi_{9}^{k-1}(\ell)\simeq \begin{pmatrix} 2 & 1 \\ 1 & 13 \end{pmatrix}.
$$
One may check that 
$$
\phi_{9}^{k-1}(\ell)\simeq\begin{pmatrix} 2 & 1 \\ 1 & 13 \end{pmatrix} \ra \langle 1,2,3 \rangle \perp \begin{pmatrix} 2 & 1 \\ 1 & 5 \end{pmatrix}.
$$
Hence, by Lemma \ref{phi9}, we have
$$
\ell  \longrightarrow \langle 1,2,3 \rangle \perp \begin{pmatrix} 2 & 1 \\ 1 & 5 \end{pmatrix} \perp 9I_{5}.
$$
This completes the proof.
\end{proof}

\begin{prop}
If $m$ is a positive integer with $\ord_{3}(m)=1$, then $m$ is not a recoverable number.
\end{prop}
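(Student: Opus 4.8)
By Lemma~\ref{not-recover}, it suffices to produce a single $\z$-lattice $L$ that represents every proper sublattice of $\langle 1,4m\rangle$ but does not represent $\langle 1,4m\rangle$ itself. Mirroring the constructions in Propositions~\ref{odd} and~\ref{2m4} and in Lemma~\ref{ord3m}, the plan is to take $L$ of the form
$$
L=\langle 1\rangle \perp C \perp 9I_{5} \perp \MM{4m+1},
$$
where $C$ is an auxiliary $\z$-lattice with $\min(C)\ge 2$, chosen so that $C$ represents no integer of $3$-adic valuation exactly $1$; $3$-adically this means the unimodular Jordan component of $C$ is anisotropic (of the type $\langle 1,1\rangle$) and every other Jordan component has scale contained in $9\z$. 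Such lattices exist, and the three large blocks play complementary roles: $\MM{4m+1}$ absorbs the sublattices of minimum greater than $4m$, the summand $9I_{5}$ together with the map $\phi_{9}$ absorbs the sublattices of large second successive minimum, and $\langle 1\rangle\perp C$ handles the finitely many reduced forms that remain.

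For the representation of proper sublattices I would argue exactly as in Lemma~\ref{ord3m}. A proper sublattice $\ell_{1}$ with $\min(\ell_{1})=1$ is isometric to $\langle 1,4m\beta^{2}\rangle$ for some $\beta\ge 2$, and since $4m\beta^{2}\ra\MM{4m+1}$ we are done. If $\min(\ell_{1})\ge 2$, I would repeatedly apply $\phi_{9}$ until the second successive minimum drops below $13$; by Lemma~\ref{phi9} it then suffices to show that the reduced form $\phi_{9}^{k}(\ell_{1})$ is represented by $\langle 1\rangle\perp C$, whence $\ell_{1}\ra\langle 1\rangle\perp C\perp 9I_{5}\subseteq L$. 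The decisive point, entirely parallel to Lemma~\ref{ord3m}, is that $d\ell_{1}=4m\cdot(\text{index})^{2}$ has $\ord_{3}$ odd; tracing this parity through $\phi_{9}$ (which alters the discriminant by the multiple $9\mu_{1}$ of $9$) severely restricts the discriminants of the possible reduced images, so that $C$ only has to cover a short explicit list.

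The non-representation of $\langle 1,4m\rangle$ is where the hypothesis $\ord_{3}(m)=1$ enters. Suppose $\phi\colon\langle 1,4m\rangle\ra L$ and let $u,v$ be the images of an orthogonal basis, so $Q(u)=1$, $Q(v)=4m$, and $B(u,v)=0$. Since $\min(C)\ge 2$, $\min(9I_{5})=9$, and $\min(\MM{4m+1})=4m+1$, the vector $u$ must lie in the $\langle 1\rangle$ block; since $4m<4m+1$ the vector $v$ has no component in $\MM{4m+1}$, and orthogonality to $u$ kills its $\langle 1\rangle$ component, so $v=v_{C}+v_{9}$ with $v_{C}\in C$ and $v_{9}\in 9I_{5}$. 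As every value of $9I_{5}$ lies in $9\z$, reduction modulo $9$ gives $Q(v_{C})\equiv 4m\pmod 9$; but $\ord_{3}(m)=1$ forces $4m\equiv 3$ or $6\pmod 9$, so that $\ord_{3}(Q(v_{C}))=1$. This contradicts the defining property of $C$, and therefore $\langle 1,4m\rangle\not\ra L$.

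The main obstacle is to exhibit a single $C$ meeting both requirements at once: on the one hand $\min(C)\ge 2$ and $C$ represents no integer of $3$-adic valuation $1$, and on the other hand $\langle 1\rangle\perp C$ represents every form on the finite list of $\phi_{9}$-reduced proper sublattices of $\langle 1,4m\rangle$. Verifying the second property is a finite but delicate classification, analogous to the fifteen-lattice computation inside Lemma~\ref{ord3m}, in which the parity of $\ord_{3}(d\ell_{1})$ is used repeatedly to discard precisely the reduced forms that $C$ cannot reach; I would expect to dispose of the smallest values $m=3$ and $m=6$ by separate explicit lattices, as was done for small cases in Propositions~\ref{odd} and~\ref{2m4}. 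The compatibility of the two requirements is exactly the tension that $\ord_{3}(m)=1$ resolves: no proper sublattice forces a single vector of valuation-$1$ norm orthogonal to a norm-$1$ vector, whereas the diagonal splitting $\langle 1\rangle\perp\langle 4m\rangle$ of $\langle 1,4m\rangle$ does.
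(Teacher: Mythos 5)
Your scaffolding ($\MM{4m+1}$ for large sublattices, $9I_{5}$ with Lemma \ref{phi9} for reduction, $\langle 1\rangle\perp C$ for the finite residue) matches the paper's, and your non-representation argument would be valid \emph{if} a lattice $C$ with your two properties existed. But the existence of $C$, which you flag as ``the main obstacle,'' is not a deferred verification: it is impossible, because your requirement that $C$ represent no integer of $3$-adic valuation one is incompatible with representing all proper sublattices. Concretely, write $\langle 1,4m\rangle=\z u\perp\z v$ and consider the index-two sublattice $\ell_{1}=\z(2u)+\z(u+v)\simeq\begin{pmatrix}4&2\\2&4m+1\end{pmatrix}$. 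Suppose $\psi\colon\ell_{1}\ra L=\langle 1\rangle\perp C\perp 9I_{5}\perp\MM{4m+1}$. Since $Q(\psi(2u))=4<9$, we have $\psi(2u)=\alpha e_{1}+c_{1}$ with $\alpha^{2}+Q(c_{1})=4$; your condition forbids $Q(c_{1})=3$, so $\psi(2u)=\pm2e_{1}$ or $\psi(2u)=c_{1}\in C$ with $Q(c_{1})=4$. Since $B(\psi(2u),\psi(u+v))=2\ne0$ and $Q(\psi(u+v))=4m+1=\min(\MM{4m+1})$, the vector $\psi(u+v)=\beta e_{1}+c_{2}+n_{2}$ has no $\MM{4m+1}$-component, with $n_{2}\in 9I_{5}$. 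If $\psi(2u)=\pm2e_{1}$, then $\beta=\pm1$ and $Q(c_{2})=4m-Q(n_{2})\equiv 4m\equiv 3$ or $6\pmod 9$, so $\ord_{3}(Q(c_{2}))=1$, a contradiction. If $\psi(2u)=c_{1}$, then $B(c_{1},c_{2})=2$ and $Q(c_{2})\equiv 4m+1-\beta^{2}\pmod 9$ lies in $\{0,3,4,6\}$ or $\{0,3,6,7\}$; since $Q(c_{1}+c_{2})=Q(c_{2})+8$ and $Q(c_{1}+2c_{2})=4Q(c_{2})+12$, in every case one of $c_{2},\,c_{1}+c_{2},\,c_{1}+2c_{2}\in C$ has norm $\equiv 3$ or $6\pmod 9$, again a contradiction. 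The same congruences show that the $\phi_{9}$-reduced images $\begin{pmatrix}4&2\\2&4\end{pmatrix}$ and $\begin{pmatrix}4&2\\2&7\end{pmatrix}$ of $\ell_{1}$ cannot be represented by $\langle 1\rangle\perp C\perp 9I_{5}$ either, so the reduction route fails at the same point.

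The paper escapes this tension by two devices absent from your plan. First, it inserts $\langle 4m-1\rangle$ into the middle block, so that each sublattice $\begin{pmatrix}a^{2}&ab\\ab&b^{2}+4m\end{pmatrix}$ is obtained from the sublattice $\begin{pmatrix}a^{2}&ab\\ab&b^{2}+1\end{pmatrix}$ of $\langle 1,1\rangle$ by adding $4m-1$ to a diagonal entry; this converts requirement B into a single $m$-independent finite computation (Lemma \ref{ord3m}) about sublattices of $\langle 1,1\rangle$, whose discriminants are perfect squares and hence automatically satisfy $\ord_{3}(d)\ne1$. Second, its non-representation argument is not a blanket valuation condition on $C$ --- indeed the paper's middle block $\langle 2,6,4m-1\rangle\perp\begin{pmatrix}2&1\\1&5\end{pmatrix}$ (resp.\ $\langle 2,3,4m-1\rangle\perp\cdots$) visibly represents $6$ (resp.\ $3$) --- but an exact mod-$9$ analysis of whether $4m$ itself is represented, which is where the case split on $m/3\bmod 3$ enters. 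Finally, your claim that $\phi_{9}$ lets you ``trace the parity of $\ord_{3}(d\ell_{1})$'' is inaccurate: $\phi_{9}$ preserves $d\bmod 9$, not the parity of $\ord_{3}(d)$, and for sublattices of $\langle 1,4m\rangle$ of index prime to $3$ the reduced image has $d\equiv 3$ or $6\pmod 9$, i.e.\ $\ord_{3}(d)=1$ --- exactly the discriminants that Lemma \ref{ord3m} is able to discard and that your $C$ would be unable to handle.
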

\begin{proof}
Let $m$ be a positive integer with $\ord_{3}(m)=1$. Then, we write $m=3m'$ with $m'\equiv 1$ or $2 \Mod 3$. We define
$$
L_m =
\begin{cases}
\langle 1,2,6,4m-1 \rangle \perp \begin{pmatrix} 2 & 1 \\ 1 & 5 \end{pmatrix} \perp 9I_{5}\perp \MM{4m+1} & \text{if $m'\equiv 1 \Mod 3$},\\[0.5cm]

\langle 1,2,3,4m-1 \rangle \perp \begin{pmatrix} 2 & 1 \\ 1 & 5 \end{pmatrix} \perp 9I_{5}\perp \MM{4m+1} & \text{if $m'\equiv 2 \Mod 3$}.
\end{cases}
$$
Clearly, $L_m$ does not represent $\langle 1, 4m \rangle$. As in the proof of Lemma \ref{2m4}, it is enough to show that $L_m$ represents every proper sublattice of $\langle 1,4m \rangle$, which is of the form $\begin{pmatrix} a^{2} & ab \\ ab & b^{2}+4m \end{pmatrix}$ for some integers $a$ and $b$ with $a \ge 2$ and $0\le 2b \le a$. By Lemma \ref{ord3m}, we have 
$$
\begin{pmatrix} a^{2} & ab \\ ab & b^{2}+1 \end{pmatrix}
\ra \langle 1,2,3 \rangle \perp \begin{pmatrix} 2 & 1 \\ 1 & 5 \end{pmatrix} \perp 9I_{5} \
\text{and} \ \langle 1,2,6 \rangle \perp \begin{pmatrix} 2 & 1 \\ 1 & 5 \end{pmatrix} \perp 9I_{5},
$$
which implies that
$$
\begin{pmatrix} a^{2} & ab \\ ab & b^{2}+4m \end{pmatrix} \ra L_m.
$$
This completes the proof.
\end{proof}

\begin{prop}\label{12lattices}
Any integer $m$ is a recoverable number if $4m$ is represented by all of the following binary $\z$-lattices
$$
\begin{array}{lll}
&\begin{pmatrix} 2 & 1 \\ 1 & 4 \end{pmatrix},
\begin{pmatrix} 3 & 1  \\ 1 & 4 \end{pmatrix},
\begin{pmatrix} 4 & 0 \\ 0 & 4 \end{pmatrix},
\begin{pmatrix} 4 & 0 \\ 0 & 5 \end{pmatrix},
\begin{pmatrix} 4 & 1 \\ 1 & 6 \end{pmatrix},
\begin{pmatrix} 4 & 1 \\ 1 & 7 \end{pmatrix},\\[0.3cm]

&\begin{pmatrix} 4 & 0 \\ 0 & 8 \end{pmatrix},
\begin{pmatrix} 4 & 1 \\ 1 & 8 \end{pmatrix},
\begin{pmatrix} 4 & 2 \\ 2 & 8 \end{pmatrix},
\begin{pmatrix} 4 & 0 \\ 0 & 9 \end{pmatrix},
\begin{pmatrix} 4 & 1 \\ 1 & 9 \end{pmatrix}, \quad \text{and}\quad
\begin{pmatrix} 4 & 2 \\ 2 & 9 \end{pmatrix}.
\end{array}
$$
\end{prop}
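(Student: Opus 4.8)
The plan is to invoke Lemma \ref{not-recover}: it suffices to show that every $\z$-lattice $L$ representing all proper sublattices of $\ell=\langle 1,4m\rangle$ must represent $\ell$ itself. Since $\langle 1,16m\rangle$ is a proper sublattice of $\ell$, the lattice $L$ contains a vector $e$ with $Q(e)=1$, and such a vector splits off orthogonally, so I may write $L=\z e\perp L'$ with $Q(e)=1$. Thus the whole problem reduces to producing a vector of norm $4m$ in $L'$, and I will henceforth assume, toward a contradiction, that $4m\nra L'$.

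The key computational device comes from feeding the minimum-$4$ (and nearby) proper sublattices of $\ell$ into $L$. For $k\ge 2$ the sublattice $\langle k^2,4m\rangle$ is represented by $L$, so there are orthogonal vectors of norms $k^2$ and $4m$; writing their images as $te+p'$ and $se+q'$ with $p',q'\in L'$ and $0\le t\le k$, orthogonality gives $B(p',q')=-ts$, $Q(p')=k^2-t^2$ and $Q(q')=4m-s^2$. A direct computation then gives $Q(\alpha p'+q')=\alpha^2(k^2-t^2)-2ts\alpha+4m-s^2$, and the quadratic $\alpha^2(k^2-t^2)-2ts\alpha-s^2=0$ has the two rational roots $\alpha=\tfrac{s}{k-t}$ and $\alpha=\tfrac{-s}{k+t}$. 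Hence a norm-$4m$ vector lands in $L'$ as soon as $k-t\mid s$ or $k+t\mid s$ (the degenerate case $t=k$ forces $s=0$ and $q'\in L'$ directly). The standing assumption $4m\nra L'$ therefore excludes, for every such embedding, the cases $t=k$, $t=k-1$, and $t=0$ with $k\mid s$; this already disposes of most configurations and pins the survivors down to a rigid shape (for instance, of $\langle 4,4m\rangle$ only $t=0$ with $s$ odd survives, and of $\langle 9,4m\rangle$ only $t=1$ with $s$ odd or $t=0$ with $3\nmid s$).

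To consume the hypothesis I would use the observation that each of the twelve forms $T_1,\dots,T_{12}$ has minimum at least $2$, so that $4m\ra T_i$ is equivalent to $\langle 1,4m\rangle\ra \langle 1\rangle\perp T_i$; consequently it suffices to locate inside $L'$ a binary sublattice isometric to one of the $T_i$, for then $4m\ra T_i\ra L'$. The surviving configurations of the previous paragraph deposit several genuinely short vectors in $L'$: a vector $p'$ of norm $4$ from $\langle 4,4m\rangle$; from $\begin{pmatrix}4&2\\2&4m+1\end{pmatrix}$, which is also a proper sublattice whose norm-$4$ vector cannot be $2e$ (that would force $4m\ra L'$), two further vectors $a',b'$ of norms $4-t_1^2$ and $4m+1-u^2$; and from $\langle 9,4m\rangle$ a vector of norm $9-t^2\in\{8,9\}$. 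I would then compute the Gram matrices of the pairs formed by these short vectors; each such pair is a binary $\z$-lattice of minimum in $\{2,3,4\}$ and bounded second successive minimum, and the assertion is that the complete list of possibilities is precisely $T_1,\dots,T_{12}$, so that the hypothesis $4m\ra T_i$ closes the argument.

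The main obstacle is exactly this residual analysis. Under the assumption $4m\nra L'$ and the divisibility obstructions forced on $s,t,u$, I must show that the short vectors extracted from the several minimum-$4$ sublattices can only span the twelve binary forms listed and never a form failing to represent $4m$. This is a finite but delicate bookkeeping problem, made harder by the fact that distinct sublattices produce their own norm-$4m$ vectors with different $e$-components, so the divisibility restrictions cannot be combined naively; the minimum-$2$ and minimum-$3$ cases $T_1=\begin{pmatrix}2&1\\1&4\end{pmatrix}$ and $T_2=\begin{pmatrix}3&1\\1&4\end{pmatrix}$, which can only arise through cross-combinations of vectors coming from different embeddings, are the most delicate both to realize and to rule in. I expect the bulk of the work, and the precise reason for exactly these twelve forms, to reside in this enumeration.
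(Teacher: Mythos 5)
Your overall strategy coincides with the paper's: split off $L=\langle 1\rangle\perp L'$ using $\langle 1,16m\rangle$, reduce to showing $4m\ra L'$, and feed in the sublattices $\langle 4,4m\rangle$ and $\langle 9,4m\rangle$ to force short vectors into $L'$. Your case analysis of the embeddings is correct and matches the paper's conclusions exactly: under the assumption $4m\nra L'$, the sublattice $\langle 4,4m\rangle$ forces a vector of norm $4$ in $L'$, and $\langle 9,4m\rangle$ forces a vector of norm $8$ (your surviving case $t=1$, $s$ odd) or norm $9$ (your case $t=0$, $3\nmid s$) in $L'$.

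The genuine gap is the last step, which you leave as an unverified ``assertion'' and describe as a delicate bookkeeping problem over several embeddings; in fact it is a two-line Cauchy--Schwarz argument involving only \emph{one} pair of vectors, and your plan misdescribes where the twelve forms come from. Take a single $v\in L'$ with $Q(v)=4$ and a single $w\in L'$ with $Q(w)=8$ or $9$. Then $B(v,w)^2\le Q(v)Q(w)\le 36$, so after replacing $w$ by $-w$ one has $b:=B(v,w)\in\{0,1,2,3,4,5\}$ (the borderline $b=6$ in the norm-$9$ case makes $v,w$ proportional, producing a unit vector $w-v$ in $L'$ and hence $4m\ra L'$ directly). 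Thus $\z v+\z w\simeq\begin{pmatrix}4&b\\b&8\end{pmatrix}$ or $\begin{pmatrix}4&b\\b&9\end{pmatrix}$, and reducing the cases $b=3,4,5$ gives precisely the remaining entries of the list: e.g.\ $\begin{pmatrix}4&5\\5&8\end{pmatrix}\simeq\begin{pmatrix}2&1\\1&4\end{pmatrix}$ and $\begin{pmatrix}4&5\\5&9\end{pmatrix}\simeq\begin{pmatrix}3&1\\1&4\end{pmatrix}$. So $T_1$ and $T_2$ do \emph{not} require ``cross-combinations of vectors coming from different embeddings''; they are just the unreduced Gram matrices with $b=5$, and your auxiliary sublattice $\begin{pmatrix}4&2\\2&4m+1\end{pmatrix}$ is not needed at all. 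With this enumeration in hand, the hypothesis gives $4m\ra\z v+\z w\subseteq L'$, contradicting $4m\nra L'$ and completing the proof. Without it, your argument does not establish that the twelve listed forms suffice, which is the entire content of the proposition.
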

\begin{proof}
Let $L$ be any $\z$-lattice that represents all proper sublattices of $\langle 1,4m \rangle$.
Since $\langle 1, 16m \rangle  \ra  L$, we have  $L=\z e_1 \perp L'= \langle 1 \rangle \perp L'$ for some $\z$-lattice $L'$. To prove the proposition, it suffices to show that $\langle 1,4m\rangle$ is represented by $L$, that is, $4m$ is represented by $L'$. 
 
Since $\langle 4, 4m \rangle  \ra  L$, one of the followings holds:
\begin{enumerate}
\item there is a vector $y \in L'$ such that $\z (2e_{1}) + \z y = \langle 4, 4m \rangle$;
\item there are vectors $x,y \in L'$ and an integer $a$ such that 
$$
\z (e_{1}+x) + \z (ae_{1}+y) = \langle 4, 4m \rangle;
$$
\item there are vectors $x,y \in L'$ and an integer $a$ such that 
$$
\z x + \z (ae_{1}+y) = \langle 4, 4m \rangle.
$$
\end{enumerate}
If (1) or (2) holds, then $4m$ is represented by $L'$. Therefore we may assume that (3) holds. Hence $4$ is represented by $L'$. 

Now, note that  $\langle 9,4m \rangle$ is also represented by $L$. Similarly to the above, one may easily show that $4m$ is represented by $L'$ or 
one of binary $\z$-lattices $\begin{pmatrix} 8 & -a \\ -a & 4m-a^2 \end{pmatrix}$ and $\langle 9, 4m-a^2\rangle$ is represented by $L'$. Hence  $8$ or $9$ is represented by $L'$.

Suppose that $L'$ represents 4 and 8. Then $L'$ represents at least one of the following binary $\z$-lattices:
$$
\begin{pmatrix} 4 & 0 \\ 0 & 8 \end{pmatrix},
\begin{pmatrix} 4 & 1 \\ 1 & 8 \end{pmatrix},
\begin{pmatrix} 4 & 2 \\ 2 & 8 \end{pmatrix},
\begin{pmatrix} 4 & 3 \\ 3 & 8 \end{pmatrix},
\begin{pmatrix} 4 & 4 \\ 4 & 8 \end{pmatrix}, \quad \text{and}\quad
\begin{pmatrix} 4 & 5 \\ 5 & 8 \end{pmatrix}.
$$
Here, we have 
$$
\begin{pmatrix} 4 & 3 \\ 3 & 8 \end{pmatrix} \simeq \begin{pmatrix} 4 & 1 \\ 1 & 6 \end{pmatrix},
\begin{pmatrix} 4 & 4 \\ 4 & 8 \end{pmatrix} \simeq \begin{pmatrix} 4 & 0 \\ 0 & 4 \end{pmatrix},\quad \text{and}\quad
\begin{pmatrix} 4 & 5 \\ 5 & 8 \end{pmatrix} \simeq \begin{pmatrix} 2 & 1 \\ 1 & 4 \end{pmatrix}.
$$

Finally, suppose that $L'$ represents 4 and 9.
Then $L'$ represents at least one of the following binary $\z$-lattices:
$$
\begin{pmatrix} 4 & 0 \\ 0 & 9 \end{pmatrix},
\begin{pmatrix} 4 & 1 \\ 1 & 9 \end{pmatrix},
\begin{pmatrix} 4 & 2 \\ 2 & 9 \end{pmatrix},
\begin{pmatrix} 4 & 3 \\ 3 & 9 \end{pmatrix},
\begin{pmatrix} 4 & 4 \\ 4 & 9 \end{pmatrix}, \quad\text{and}\quad
\begin{pmatrix} 4 & 5 \\ 5 & 9 \end{pmatrix},
$$
Here, we have 
$$
\begin{pmatrix} 4 & 3 \\ 3 & 9 \end{pmatrix} \simeq \begin{pmatrix} 4 & 1 \\ 1 & 7 \end{pmatrix},
\begin{pmatrix} 4 & 4 \\ 4 & 9 \end{pmatrix} \simeq \begin{pmatrix} 4 & 0 \\ 0 & 5 \end{pmatrix}, \quad\text{and}\quad
\begin{pmatrix} 4 & 5 \\ 5 & 9 \end{pmatrix} \simeq \begin{pmatrix} 3 & 1 \\ 1 & 4 \end{pmatrix}.
$$
Therefore, if $4m$ is represented by all of the above $12$ binary $\z$-lattices, then $4m$ is represented by $L'$. This completes the proof. 
\end{proof}

\begin{cor}
Let $m \equiv 1 \pmod 8$ be a prime. If $m$ is a quadratic residue modulo $q$ for any prime $q\in\{3,5,7,11,23,31\}$, then $m$ is a recoverable number. In particular, any prime $m \equiv 5569 \pmod {3\cdot 5\cdot 7\cdot 11\cdot 23\cdot 31}$ is a recoverable number. Therefore there are infinitely many non square recoverable numbers. 
\end{cor}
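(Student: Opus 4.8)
The plan is to reduce everything to Proposition \ref{12lattices}: under the hypotheses it suffices to check that $4m$ is represented by each of the twelve binary $\z$-lattices listed there. For each of these forms I would first locate the imaginary quadratic order that governs its representation numbers, completing the square where it helps. For example $\begin{pmatrix}4&2\\2&9\end{pmatrix}$ takes the value $(2x+y)^2+8y^2$, so it represents $4m$ exactly when $m=u^2+8v^2$; thus it is controlled by the discriminant $-32$. Running this reduction through all twelve forms, they split into four groups according to the class number of the relevant order, and the six quadratic residue conditions together with $m\equiv 1\pmod 8$ will be seen to supply precisely the needed input for each.

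Two of the groups are routine. The forms $\begin{pmatrix}4&0\\0&4\end{pmatrix}$, $\begin{pmatrix}4&0\\0&8\end{pmatrix}$, $\begin{pmatrix}2&1\\1&4\end{pmatrix}$, $\begin{pmatrix}4&2\\2&8\end{pmatrix}$ reduce to representing $m$ by $x^2+y^2$, by $x^2+2y^2$, or by the class-number-one form $x^2+xy+2y^2$ of discriminant $-7$; these hold because $m\equiv 1\pmod 8$ and, where relevant, $m$ is a quadratic residue modulo $7$, together with multiplicativity of the norm. The forms attached to the discriminants $-20$, $-36$ and $-32$, namely $\begin{pmatrix}4&0\\0&5\end{pmatrix}$, $\begin{pmatrix}4&0\\0&9\end{pmatrix}$ and $\begin{pmatrix}4&2\\2&9\end{pmatrix}$, have one class per genus, so that representability of $4m$ is governed entirely by local conditions; these unwind to $m\equiv 1\pmod 4$ or $\pmod 8$ together with $m$ a quadratic residue modulo $5$ and modulo $3$, all guaranteed.

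The remaining forms constitute the crux and the main obstacle. They are attached to the (mostly non-maximal) orders of discriminant $-44,-92,-108,-124$, of class number three, and $-140$, of class number six, for which genus theory fails to isolate the prescribed form because the class group has nontrivial odd part. Here I would argue by composition in the class group: each such form represents a small value lying in a class of order three, and for a prime $m$ split in the relevant field — which the quadratic residue hypotheses modulo $11,23,3,31$ (and modulo $5,7$ for $-140$) guarantee via reciprocity and $m\equiv 1\pmod 8$ — multiplying that fixed order-three class by any of the (at most) three ideal classes attached to $m$, with a free choice of conjugate, realizes the class of the given form or of its opposite; since a form and its opposite represent the same integers, $4m$ is represented. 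When the chosen small value is coprime to the discriminant (for instance the value $2$ for the scalings of the fundamental discriminants $-23$ and $-31$) this is clean; the delicate cases are those in which the natural multiplier $4$ meets the conductor prime $2$. For the discriminant $-108$ this is resolved by the substitution $y\mapsto 2y$, which reduces the problem to representing $m$ by $x^2+x+7$, the class-number-one form of discriminant $-27$, so that $m\equiv 1\pmod 3$ already suffices; for $-44$ and $-140$ one checks that multiplication by $4$ trivializes the class obstruction, using in the latter case the two genus characters to place $m$ in the principal genus before running the order-three argument on $\mathrm{Cl}^2$. Making these conductor interactions uniform is the part I expect to require the most care.

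Finally, each condition that $m$ be a quadratic residue modulo $q$ is a congruence condition on $m\bmod q$, and one verifies directly that $5569$ is a quadratic residue modulo each of $3,5,7,11,23,31$ and that $5569\equiv 1\pmod 8$. By the Chinese remainder theorem these six conditions together with $m\equiv 1\pmod 8$ cut out a single arithmetic progression whose residue is coprime to its modulus, so Dirichlet's theorem on primes in arithmetic progressions produces infinitely many primes $m$ in it. Each such $m$ is recoverable by the preceding analysis, and a prime is never a perfect square, so these give infinitely many non-square recoverable numbers.
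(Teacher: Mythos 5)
Your overall skeleton --- reduce to Proposition \ref{12lattices}, analyze the twelve forms one at a time via the arithmetic of binary forms, then finish with the Chinese remainder theorem and Dirichlet --- is exactly the paper's, and your treatment of the ``routine'' forms and of the final numerical step is fine. The gap lies precisely in the cases you yourself flag as the crux, and it is a real one. First, a structural point: the matrices $\left(\begin{smallmatrix}4&1\\1&6\end{smallmatrix}\right)$ and $\left(\begin{smallmatrix}4&1\\1&8\end{smallmatrix}\right)$ are \emph{imprimitive} as quadratic forms ($4x^2+2xy+6y^2=2(2x^2+xy+3y^2)$, and similarly for the other), so the correct reduction is that they represent $4m$ if and only if $2m$ is represented by the non-principal form $2x^2+xy+3y^2$ of fundamental discriminant $-23$ (resp.\ $2x^2+xy+4y^2$ of discriminant $-31$). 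Your assignment of the discriminants $-92$ and $-124$ puts you in the conductor-$2$ order, where the ideal--form dictionary does not apply to $4m$, and your remark that the value $2$ is ``coprime to the discriminant'' is false there (both $-92$ and $-124$ are even, and $2$ is the conductor). Second, even after moving to the maximal order, the composition principle you invoke --- that multiplying the order-three class above $2$ by the classes above $m$, ``with a free choice of conjugate, realizes the class of the given form or of its opposite'' --- is asserted, not proved, and it is not a general fact about class groups. It does happen to hold in a cyclic group of order $3$: one checks that the set $\{[\mathfrak q]^{\pm1}[\mathfrak p]^{\pm1}\}$ always meets $\{g,g^{-1}\}$ when $[\mathfrak q]=g\neq e$. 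But that verification is the entire content of the step, and it is exactly what is missing. The paper avoids composition altogether by proving, via explicit sublattice chains, that $4m\ra\left(\begin{smallmatrix}4&1\\1&6\end{smallmatrix}\right)$ if and only if $m\ra\gen\left(\begin{smallmatrix}1&1/2\\1/2&6\end{smallmatrix}\right)$; since that genus is the whole class group of discriminant $-23$, the condition is purely local and reduces to the quadratic residue hypothesis modulo $23$ (and modulo $31$ for the other form).

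Two smaller points. The discriminants $-44$, $-108$, $-140$ need no class-group argument at all: the forced parity substitution ($x\mapsto 2x$ or $y\mapsto 2y$) lands them in $x^2+xy+3y^2$ (discriminant $-11$, class number one), $x^2+xy+7y^2$ (discriminant $-27$, class number one), and $x^2+xy+9y^2$ (discriminant $-35$, one class per genus), so the worry about $h(-140)=6$ and ``running the order-three argument on $\mathrm{Cl}^2$'' is misplaced --- you noticed this mechanism for $-108$ but should apply it uniformly. Your closing CRT/Dirichlet argument is correct, and in fact repairs a small slip in the statement, which omits the modulus $8$ from the final congruence.
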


\begin{proof}
Note that $4m$ is represented by all of $12$ binary $\z$-lattices in Proposition \ref{12lattices} if and only if $m$ is represented by all of the following binary $\z$-lattices
\begin{equation}\label{9lattices}
\begin{array}{lll}
&\hspace{0.7cm}\begin{pmatrix} 1 & 0 \\ 0 & 1 \end{pmatrix},
\begin{pmatrix} 1 & 0  \\ 0 & 2 \end{pmatrix},
\begin{pmatrix} 1 & \frac12 \\ \frac12 & 2 \end{pmatrix},
\begin{pmatrix} 1 & \frac12 \\ \frac12 & 3 \end{pmatrix},\\[0.3cm]
&
\begin{pmatrix} 1 & 0 \\ 0 & 5 \end{pmatrix},
\begin{pmatrix} 1 & \frac12 \\ \frac12 & 7 \end{pmatrix},
\begin{pmatrix} 1 & 0 \\ 0 & 8 \end{pmatrix},
\begin{pmatrix} 1 & \frac12 \\ \frac12 & 9 \end{pmatrix}, \quad\text{and}\quad
\begin{pmatrix} 1 & 0 \\ 0 & 9 \end{pmatrix},
\end{array}
\end{equation}
and furthermore, $m$ is represented by both 
\begin{equation}\label{2genera}
\gen\left(
\begin{pmatrix} 1 & \frac12 \\ \frac12 & 6 \end{pmatrix}\right) \quad \text{and} \quad \gen\left(\begin{pmatrix} 1 & \frac12 \\ \frac12 & 8 \end{pmatrix}\right).
\end{equation}
As a sample, assume that $4m$ is represented by $\begin{pmatrix} 4 & 1 \\ 1 & 6 \end{pmatrix}$. Then $2m$ is represented by $\begin{pmatrix} 2 & \frac12 \\ \frac12 & 3 \end{pmatrix}$. Assume that  $2x^2+xy+3y^2=2m$ for some integers $x$ and $y$. Then either $x+y$ or $y$ is even. If $x+y=2z$ for some integer $z$, then $2x^2-5xz+6z^2=m$. Hence $m$ is represented by $\begin{pmatrix} 2 & \frac12 \\ \frac12 & 3 \end{pmatrix}$. If $y=2z$, then $x^2+xz+6z^2=m$. Hence   $m$ is represented by $\begin{pmatrix} 1 & \frac12 \\ \frac12 & 6 \end{pmatrix}$.
 Note that 
$$
\gen\left(\begin{pmatrix} 1 & \frac12 \\ \frac12 & 6 \end{pmatrix}\right)\big/\sim=\left\{ \begin{pmatrix} 1 & \frac12 \\ \frac12 & 6 \end{pmatrix}, \begin{pmatrix} 2 & \frac12 \\ \frac12 & 3 \end{pmatrix} \right\}.
$$
Conversely, assume that $m$ is represented by the genus of $\begin{pmatrix} 1 & \frac12 \\ \frac12 & 6 \end{pmatrix}$. If $m$ is represented by $\begin{pmatrix} 1 & \frac12 \\ \frac12 & 6 \end{pmatrix}$, then we have
$$
\langle 4m \rangle \ra \begin{pmatrix} 4 &2 \\2 & 24 \end{pmatrix} \ra \begin{pmatrix} 4 &1 \\1 & 6 \end{pmatrix}. 
$$
 If $m$ is represented by $\begin{pmatrix} 2 & \frac12 \\ \frac12 & 3 \end{pmatrix}$, then we have
 $$
 \langle 4m \rangle \ra \begin{pmatrix} 8 & 2 \\ 2 & 12 \end{pmatrix} \simeq \begin{pmatrix} 8 & 6 \\ 6 & 16 \end{pmatrix} \ra \begin{pmatrix} 8 & 3 \\ 3 & 4 \end{pmatrix} \simeq \begin{pmatrix} 4 &1 \\1 & 6 \end{pmatrix}. 
$$
Hence $4m$ is represented by \!$\begin{pmatrix} 4 & 1 \\ 1 & 6 \end{pmatrix}$ if and only if $m$ is represented by \!$\gen\left(\!\begin{pmatrix} 1 & \frac12 \\ \frac12 & 6 \end{pmatrix}\!\right)$.

Note that 9 binary $\z$-lattices in \eqref{9lattices} have class number $1$. 
Therefore if $m\equiv 1 \pmod 8$ is prime, and  for any prime $q\in \{3,5,7,11,23,31\}$, $m$ is a quadratic residue modulo $q$, then one may easily check that $m$ is represented by $9$ binary $\z$-lattices in \eqref{9lattices} and by both genera in \eqref{2genera}.
This implies   that $m$ is a recoverable number by Proposition \ref{12lattices}. This completes the proof.
\end{proof}

\begin{rmk}{\rm We checked that any non square integer less than or equal to $35$ is not a recoverable number. }
\end{rmk}


\end{document}